\documentclass[12pt,reqno]{amsart}
\usepackage{amsmath,amssymb,extarrows}
\usepackage{hyperref}
\usepackage{url}
\usepackage{tikz,enumerate}
\usepackage{diagbox}
\usepackage{appendix}
\usepackage{epic}
\usepackage{comment}
\usepackage{bm}
\usepackage{adjustbox}
\usepackage{booktabs}
\usepackage{array}
\usepackage{mathrsfs} 

\usepackage{float} 

\usepackage{cite}
\usepackage{hyperref}
\usepackage{array}

\allowdisplaybreaks[4]

\newtheorem{theorem}{Theorem}
\newtheorem{corollary}[theorem]{Corollary}

\newtheorem{lemma}[theorem]{Lemma}

\theoremstyle{definition}

\theoremstyle{remark}

\numberwithin{equation}{section}
\numberwithin{theorem}{section}
\numberwithin{defn}{section}

\begin{document}
\title[Proofs of four Takigiku--Tsuchioka conjectures]
 {Proofs of four Takigiku--Tsuchioka conjectures on level $2$ principal characters of $A_{2\ell-1}^{(2)}$}

\author{Liuquan Wang}
\address{School of Mathematics and Statistics, Wuhan University, Wuhan 430072, Hubei, People's Republic of China}
\email{wanglq@whu.edu.cn;mathlqwang@163.com}

\subjclass[2020]{11P84, 33D15, 33D60, 05A17}

\keywords{Rogers--Ramanujan type identities; Bailey pairs; affine Lie algebras; Andrews--Gordon type series; Gaussian polynomials}


\begin{abstract}
We prove four conjectures of Takigiku and Tsuchioka consisting of eight Rogers--Ramanujan type identities. Two conjectures involve triple sums and a third involves quadruple sums, and their product sides are certain level $2$ principal characters of $A_{13}^{(2)}$. The fourth conjecture gives multisum representations of certain level $2$ principal characters of $A_{2\ell-1}^{(2)}$. The proofs rely on the Bailey pair method, finite sum identities involving Gaussian polynomials, and the quintuple product identity.
\end{abstract}

\maketitle

\section{Introduction and main results}\label{sec-intro}

Rogers--Ramanujan type identities are an important topic in the theory of $q$-series,  with deep connections to number theory, combinatorics, the representation theory of Lie algebras, and mathematical physics. Their study dates back to the celebrated Rogers--Ramanujan identities:
\begin{align}\label{RR}
    \sum_{n=0}^\infty \frac{q^{n^2+\delta n}}{(q;q)_n}=\frac{1}{(q^{\delta+1},q^{4-\delta};q^5)_\infty}, \quad \delta=0,1.
\end{align}
Here and throughout, we assume $|q|<1$ and use the standard $q$-series notation:
\begin{align}(a_1,a_2,\dots,a_m;q)_n:=\prod\limits_{i=1}^m\prod\limits_{k=1}^n (1-a_iq^{k-1}), \quad n\in \mathbb{Z}_{\geq 0}\cup\{\infty\}.
\end{align}
These identities were first proved by Rogers \cite{Rogers1894} in 1894 and later rediscovered by Ramanujan and Schur; see Sills' book \cite{Sills-book} for more historical background. 

The Rogers--Ramanujan identities can also be interpreted as partition identities equating the numbers of partitions of any given integer into two kinds of partitions: one defined by difference conditions on adjacent parts and the other by congruence conditions modulo $5$. For example, the first identity is equivalent to saying that, for each nonnegative integer $N$, the number of partitions of $N$ in which successive parts differ by at least two equals the number of partitions of $N$ into parts congruent to $1$ or $4$ modulo $5$. Gordon \cite{Gordon1961} generalized \eqref{RR} to partition identities for any odd modulus greater than $3$. Its equivalent analytic form was proved by Andrews \cite{Andrews1974}. An even modulus analog was then found by Bressoud \cite{Bressoud1980}. The Andrews--Gordon--Bressoud identities \cite{Andrews1974,Gordon1961,Bressoud1980} give a unified $q$-series formulation of these results:  for $\delta=0,1$, $k\geq 2$ and $1\leq i \leq k$,
\begin{equation}\label{AGB}
\sum_{n_1,\dots,n_{k-1}\geq 0}
\frac{q^{N_1^2+\cdots+N_{k-1}^2+N_i+\cdots+N_{k-1}}}{(q;q)_{n_1}\cdots(q;q)_{n_{k-2}}
(q^{\,2-\delta};q^{\,2-\delta})_{n_{k-1}}}=
\frac{(q^i,q^{2k+\delta-i},q^{2k+\delta};q^{2k+\delta})_\infty}{(q;q)_\infty},
\end{equation}
where $N_j=n_j+n_{j+1}+\cdots+n_{k-1}$.
These identities motivate the search for similar multisum Rogers--Ramanujan type identities. 

The theory of Bailey pairs \cite{Bailey1949} are among the principal analytic tools in the study of Rogers--Ramanujan type identities. A Bailey pair relative to $a$ with base $q$ is a pair of sequences $(\alpha_n(a;q),\beta_n(a;q))$ satisfying for all $n\geq 0$ that
\begin{equation}\label{BP-defn}
 \beta_n(a;q)=\sum_{r=0}^{n}
 \frac{\alpha_r(a;q)}{(q;q)_{n-r}(aq;q)_{n+r}}.
\end{equation}
Transformations of Bailey pairs and their limiting cases produce infinite sum--product identities from finite summation relations. A famous application is Slater's work \cite{Slater1952} wherein Bailey pairs were used to establish 130 Rogers--Ramanujan type identities. The theory was subsequently extended to include Bailey chains \cite{Andrews1984} and Bailey lattices \cite{AAB}. We refer the reader to Warnaar's survey \cite{Warnaar-survey} for a detailed introduction on Bailey pairs.

Affine Lie algebras provide an algebraic approach to the study of Rogers--Ramanujan type identities. The work of Lepowsky and Milne~\cite{LM1978} and the vertex-operator constructions of Lepowsky and Wilson~\cite{LW1982,LW1984,LW1985} established connections between the Rogers--Ramanujan identities and principally specialized characters. Character formulas provide candidate infinite products and suitably chosen monomial bases give rise to partition restrictions and multiple series. A partition identity can then be interpreted as the equality of two expressions for the same graded character. This motivates the search for explicit series expansions and partition-theoretic interpretations of character products for different affine types and levels. In particular, the principal character $\chi_{X_N^{(r)}}(\lambda)$ of an integrable highest weight module with highest weight $\lambda$ over an affine Lie algebra of type $X_N^{(r)}$ is expected to admit a Rogers--Ramanujan type sum representation~\cite[Sec.~10]{LW1984}. For example, the product side of the Andrews--Gordon--Bressoud identities \eqref{AGB} equals $\chi_{A_1^{(1)}}((2k-1+\delta-i)\Lambda_0+(i-1)\Lambda_1))$ where $\Lambda_0$ and $\Lambda_1$ are the fundamental weights of $A_1^{(1)}$. 

As a notable development beyond the case $A_1^{(1)}$,  Capparelli \cite{Capparelli-thesis}  studied the structure of the level 3 standard modules of the affine Lie algebra $A_2^{(2)}$ and discovered two  partition identities. Nandi \cite{Nandi-thesis} conducted a similar investigation of the level 4 standard modules of $A_2^{(2)}$ and conjectured three partition identities modulo 14, which were subsequently proved by Takigiku and Tsuchioka~\cite{TT2024}. Baker, Kanade, Russell, and Sadowski~\cite{BKRS2023} obtained quadruple-sum representations for Nandi's identities. Using the Bailey pair machinery, Kanade and Russell \cite{KR-2022} presented sum-sides for principal characters of all integrable and highest weight irreducible modules for $A_2^{(2)}$. It is worth mentioning that the product sides of Nandi's identities \cite{Nandi-thesis} are also level 2 principal characters of $A_{11}^{(2)}$. Rogers--Ramanujan type identities with product side associated with the affine Lie
algebras $A_3^{(2)}$, $A_5^{(2)}$, $A_7^{(2)}$ and $A_9^{(2)}$ have also been studied in works such as \cite{Andrews2019,Bos-Misra,Kanade,KR-2019}. 

A closely related topic that has attracted a lot of attention in the last decade is Kanade--Russell conjectures \cite{KR-2015,KR-2019}.   Kanade and Russell \cite{KR-2015,KR-2019}  proposed some new partition identities and provided analytic sum representations. Some of their identities are motivated by level $2$ characters of $A_9^{(2)}$. Together with a  partition identity modulo 9 and two identities modulo 12 conjectured in  Russell's thesis \cite{Russell-thesis}, their conjectures contain seventeen identities modulo $12$ and five identities modulo $9$. Kur\c{s}ung\"oz~\cite{Kursungoz} constructed Andrews--Gordon type series for the partition generating functions for several Kanade--Russell conjectures. Tsuchioka \cite{Tsuchioka} reformulated three of the modulo $9$ identities in terms of vertex operators for level $3$ standard modules of the affine Lie algebra of type $D_4^{(3)}$. These Kanade--Russell identities have now been proved through works of Bringmann, Jennings-Shaffer, and Mahlburg \cite{BJSM}, Rosengren \cite{Rosengren},  Wang \cite{Wang2026}, Xia \cite{Xia2026} and Mizuno \cite{Mizuno2026}.

In this paper, we prove four conjectures of Takigiku and Tsuchioka concerning Rogers--Ramanujan type identities for level $2$ principal characters of $A_{2\ell-1}^{(2)}$. The first three, stated in \cite[Conjectures 6.1--6.3]{TT2021}, concern two sets of triple sum identities and one quadruple sum identity for $A_{13}^{(2)}$. The fourth, stated in \cite[Conjecture 1.3]{TT2026}, gives a family of multisum identities for certain level $2$ principal characters of $A_{6n+3}^{(2)}$. Let $\delta_{m,j}$ be the Kronecker delta function and $\mathcal{Q}$ denote the quintuple product \footnote{In \cite{TT2021} $\mathcal{Q}(x;q)$ is denoted as $Q(q,x)$.}
\begin{align}
\mathcal{Q}(x;q):=(-q/x,-x,q;q)_\infty(q/x^2,qx^2;q^2)_\infty, \quad x\neq 0.
\end{align}
As discussed in \cite{TT2026}, every level $2$ dominant integral weight of $A_{2\ell-1}^{(2)}$ can be written, up to an automorphism of the Dynkin diagram, as $\Lambda_m'=(\delta_{m,0}+\delta_{m,1})\Lambda_0+\Lambda_m$  for some $0\leq m \leq \ell$. For $1\leq m\leq\ell$, the principal character of the corresponding standard module is given by
\begin{align}
    \chi_{A_{2\ell-1}^{(2)}}(\Lambda_m')=\frac{(q^{2\ell+2};q^{2\ell+2})_\infty}{(q^2;q^2)_\infty}\frac{(q^{2m},q^{2\ell+2-2m};q^{2\ell+2})_\infty}{(q^m,q^{2\ell+2-m};q^{2\ell+2})_\infty}=\frac{\mathcal{Q}(q^m;q^{2\ell+2})}{(q^2;q^2)_\infty}.
\end{align}
In light of the preceding discussion and the work of Lepowsky and Wilson~\cite{LW1982,LW1984}, it is natural to seek an Eulerian multisum whose product evaluation is $\chi_{A_{2\ell-1}^{(2)}}(\Lambda_m')$.

In~\cite{TT2021}, Takigiku and Tsuchioka proved Andrews--Gordon type series for level $5$ and level $7$ $A_2^{(2)}$-modules and proposed further evaluations for level $2$ $A_{13}^{(2)}$-modules. Retaining their notation, we define
\begin{align}
    F_1(a,b,c):=\sum_{i,j,k\geq 0} (-1)^k\frac{q^{4\binom{i}{2}+2\binom{j}{2}+4\binom{k}{2}+2ij+4ik+4jk+ai+bj+ck}}{(q;q)_i(q^2;q^2)_j(q^4;q^4)_k}, \\
     F_2(a,b,c):=\sum_{i,j,k\geq 0} (-1)^j\frac{q^{\binom{i}{2}+2\binom{j}{2}+16\binom{k}{2}+2ij+4ik+4jk+ai+bj+ck}}{(q;q)_i(q^2;q^2)_j(q^4;q^4)_k}
\end{align}
and
\begin{align}
    F_3(a,b,c,d):=\sum_{i,j,k,\ell \geq 0}(-1)^k\frac{q^{2\binom{i}{2}+4\binom{j}{2}+2\binom{k}{2}+16\binom{\ell}{2}+2ij+2ik+4i\ell+4jk+8j\ell_+4k\ell +ai+bj+ck+d\ell}}{(q;q)_i(q^2;q^2)_j(q^2;q^2)_k(q^4;q^4)_\ell}.
\end{align}
Takigiku and Tsuchioka \cite[Conjectures 6.1--6.3]{TT2021} conjectured that with suitable choices of $a,b,c,d$, the functions $F_i$ ($i=1,2,3$) match with the character $\chi_{A_{13}^{(2)}}(\Lambda_m')$ with $m\in \{1,3,5,7\}$.
The first goal of this paper is to prove these three conjectures which we restate as the following theorems.
\begin{theorem}\label{thm-1}
(Cf.~\cite[Conjecture 6.1]{TT2021}.) 
We have
\begin{align}
    F_1(2,2,2)=\chi_{A_{13}^{(2)}}(\Lambda_3)=\frac{1}{(q^2,q^4,q^8,q^{12},q^{14},q^3,q^{13};q^{16})_\infty},  \label{thm1-id-1} \\
    F_1(4,2,6)=\chi_{A_{13}^{(2)}}(\Lambda_5)=\frac{1}{ (q^2,q^4,q^8,q^{12},q^{14},q^5,q^{11};q^{16})_\infty},  \label{thm1-id-2}\\
    F_1(6,4,6)=\chi_{A_{13}^{(2)}}(\Lambda_7)=\frac{1}{(q^4,q^6,q^8,q^{10},q^{12},q^7,q^{9};q^{16})_\infty}. \label{thm1-id-3}
\end{align}
\end{theorem}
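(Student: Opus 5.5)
We reduce the triple sum $F_1(a,b,c)$ to a single-sum or Bailey-pair form by carrying out the inner summations over $k$ and $j$ in closed form. Write $N=i$, and note that the quadratic form $4\binom{i}{2}+2\binom{j}{2}+4\binom{k}{2}+2ij+4ik+4jk$ becomes, after substituting $m=j+2k$, essentially $2\binom{i}{2}\cdot 2+\tfrac12 m^2+2im+\cdots$ plus a correction in $k$ alone. The exponent of $k$ inside depends on $j,k$ only through $m$ up to terms $-4k^2+\dots$, so the idea is to fix $m=j+2k$ and evaluate the finite sum
\begin{align*}
S_m(c')=\sum_{k=0}^{\lfloor m/2\rfloor}\frac{(-1)^k q^{4\binom{k}{2}-\cdots+c'k}}{(q^2;q^2)_{m-2k}(q^4;q^4)_k},
\end{align*}
which by the standard finite identities for Gaussian polynomials (a $q$-analogue of the identity $\sum_k (-1)^k\binom{m-k}{k}$-type, equivalently expanding $(x;q^2)_\infty/(x^2;q^4)_\infty=1/(-x;q^2)_\infty$) collapses to a single product such as $q^{\ast}/(q;q)_m$ or $1/(q^2;q^2)_m$ times a simple factor, depending on $c'$ relative to $b$. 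For the three parameter choices $(2,2,2),(4,2,6),(6,4,6)$ I will check that $c=2b-2$ or $c=2b+2$ type relations hold so that this collapse occurs (possibly producing two terms).

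After this step $F_1$ becomes a double sum $\sum_{i,m}\frac{q^{Q(i,m)+ai+\beta m}}{(q;q)_i(q^{\epsilon};q^{\epsilon})_m}$, which I will recognize as a limiting case of a Bailey pair. Concretely, the sum over $i$ with $(q;q)_i$ and $q^{2i^2+\dots}$ together with the $m$-sum is of the shape $\sum_n q^{n^2}a^n\beta_n$ after setting $n=i+m$ and summing the inner finite sum over $m$ by the $q$-Chu--Vandermonde sum; this identifies $\beta_n$ relative to $a=1$ or $a=q$ with base $q$ (or $q^2$). I would then look for the matching $\alpha_n$ from Slater's list or construct it by inverting \eqref{BP-defn}, expecting $\alpha_n$ to be a signed combination of $q^{\text{quadratic}}$ terms with period suggested by modulus $16$.

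Inserting the pair into Bailey's lemma (the $\rho_1,\rho_2\to\infty$ limit) gives
\begin{align*}
F_1(a,b,c)=\frac{1}{(q;q)_\infty}\sum_{r\ge0} q^{r^2}\alpha_r,
\end{align*}
or its $q\to q^2$ analogue, and the resulting bilateral theta series should be summed by the quintuple product identity, producing $\mathcal{Q}(q^m;q^{16})/(q^2;q^2)_\infty$ with $m=3,5,7$, which equals the stated products since $\ell=7$ and $2\ell+2=16$. The main obstacle is the middle step: finding a Bailey pair whose $\beta_n$ matches the reduced double sum exactly, since the mixed bases $(q;q)_i$ and $(q^2;q^2)_m$ do not fit a single standard pair. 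If direct matching fails, I would first try to convert $(q^2;q^2)_m=(q;q)_m(-q;q)_m$ and absorb $(-q;q)_m$ via a $q$-binomial expansion, turning the sum into a Bailey pair relative to base $q$ with an extra Bailey-chain step involving $\rho_1=-q^{1/2}$-type parameters, and then verify the resulting $\alpha_n$ by the finite Gaussian polynomial identity.
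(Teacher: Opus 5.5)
Your first reduction does not go through. With $m=j+2k$ one has $2\binom{j}{2}+4\binom{k}{2}+4jk=m^2-m-2k^2$. So at fixed $m$ the residual $k$-exponent is $-2k^2$, not the $+4\binom{k}{2}$ that a collapse via $(x;q^2)_\infty/(x^2;q^4)_\infty=1/(-x;q^2)_\infty$ would need.

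Indeed, the generating function of the inner sums $c_m$ is $\sum_{j\ge 0}q^{j^2+(b-1)j}x^j(q^{4j+c}x^2;q^4)_\infty/(q^2;q^2)_j$, which is not a product. Concretely, $(q^2;q^2)_2c_2=q^6+q^4-q^2$ for $(b,c)=(2,2)$, and $(q^2;q^2)_4c_4=q^{16}-q^{18}-q^{22}+q^{26}+q^{30}$ for $(b,c)=(2,6)$. So the double sum over $(i,m)$ with a single product denominator never materializes, and the subsequent Chu--Vandermonde step has nothing to act on.

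The reduction that works is to sum over $k$ at fixed $(i,j)$ by \eqref{Euler2} in base $q^4$. Because $k$ couples to $i$ and $j$ with the same weight $4$, this yields $(q^{4(i+j)+c};q^4)_\infty$. With $n=i+j$, that is $(q^2;q^4)_\infty/(q^2;q^4)_n$ for $c=2$ and $(q^2;q^4)_\infty/(q^2;q^4)_{n+1}$ for $c=6$. The remaining $i$-sum is then $P_n(x)/(q^2;q^2)_n$ with $x=1$ or $q^2$, with an extra $q^{2n}$ for $(6,4,6)$; here $P_n$ is the polynomial in \eqref{eq-Pdef}.

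Even with the correct reduction, recognizing the Bailey pair is the entire difficulty, and the proposal supplies nothing for it. $P_n(x)$ is not a $q$-Chu--Vandermonde sum, and inverting \eqref{BP-defn} always produces some $\alpha_n$. What is needed is a proved closed form, and you name no identity that would give one.

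The paper obtains it from Warnaar's polynomial identities (Lemma~\ref{lem-P-1}). These express $P_n(q)$ and $P_n(q^2)$ as $\sum_j(-1)^j\{q^{15j^2+\cdots}{2n\brack n-3j}_{q^2}+q^{15j^2+\cdots}{2n\brack n-3j-1}_{q^2}\}$. They are combined with $P_n(q)+P_n(q^2)=(1+q^{2n})P_n(1)$ and the passage to upper index $2n+1$ (Lemma~\ref{lem-P-2}). The resulting $\alpha_n$ in Corollary~\ref{cor-BP123} depend on $n \bmod 3$; the modulus $16$ appears only after multiplying by $q^{n^2+n}$.

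The $\rho_1,\rho_2\to\infty$ limit is also the wrong instance of Bailey's lemma. Since $1/\big((q^2;q^4)_n(q^2;q^2)_n\big)=(-q^2;q^2)_n/(q^2;q^2)_{2n}$, a factor $(-q^2;q^2)_n$ survives on the $\beta$-side. This forces \eqref{BP-S2-aq-limit} in base $q^2$ (relative to $a=q^2$, i.e.\ $\rho_1=-q^2$) for the second and third identities. For the first it forces \eqref{S6-a1-limit} in base $q^2$ ($\rho_1=-1$), and there one must use the twisted $\beta_n=\frac{1+q^{2n}}{2}P_n(1)/(q^2;q^2)_{2n}$, so that $(-1;q^2)_n\beta_n=(-q^2;q^2)_nP_n(1)/(q^2;q^2)_{2n}$.

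Your target $F_1=\frac{1}{(q;q)_\infty}\sum_r q^{r^2}\alpha_r$ does not have this shape. Your $\rho_1=-q^{1/2}$ fallback, built on splitting $(q^2;q^2)_m$, is aimed at the wrong factor.
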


\begin{theorem}\label{thm-2}
(Cf.~\cite[Conjecture 6.2]{TT2021}.) We have
\begin{align}
    F_2(1,3,12)=\chi_{A_{13}^{(2)}}(\Lambda_0+\Lambda_1)=\frac{1}{(q^4,q^6,q^8,q^{10},q^{12},q,q^{15};q^{16})_\infty}, \label{thm2-id-1} \\
    F_2(1,1,8)=\chi_{A_{13}^{(2)}}(\Lambda_3)=\frac{1}{(q^2,q^4,q^8,q^{12},q^{14},q^3,q^{13};q^{16})_\infty},  \label{thm2-id-2} \\
    F_2(3,3,6)=\chi_{A_{13}^{(2)}}(\Lambda_7)=\frac{1}{(q^4,q^6,q^8,q^{10},q^{12},q^7,q^{9};q^{16})_\infty}. \label{thm2-id-3}
\end{align}
\end{theorem}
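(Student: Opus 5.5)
We prove the three identities of Theorem~\ref{thm-2} by summing out the indices one at a time, keeping the index $k$ (with base $q^4$ and quadratic part $16\binom{k}{2}$) for last, and then closing with a Bailey pair or quintuple product argument.

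\textbf{Step 1: sum over $i$.} For fixed $j,k$, the inner sum over $i$ is
\[
\sum_{i\ge0}\frac{q^{\binom{i}{2}}\bigl(q^{a+2j+4k}\bigr)^i}{(q;q)_i}=(-q^{a+2j+4k};q)_\infty ,
\]
by Euler's identity. Writing
\[
(-q^{a+2j+4k};q)_\infty=\frac{(-q^{a};q)_\infty}{(-q^{a};q)_{2j+4k}},
\]
we obtain a double sum over $j,k$. The factor $1/(-q^a;q)_{2j+4k}$ can be combined with $(q^2;q^2)_j$. Note that
\[
(-q^a;q)_{2j}=(-q^a;q^2)_j\,(-q^{a+1};q^2)_j .
\]
Since $a$ is odd in all three cases ($a=1,1,3$), this factor fits naturally in base $q^2$.

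\textbf{Step 2: sum over $j$.} Rather than summing $j$ directly, we rewrite the $j$-sum with $N=j+2k$ as the summation variable. This produces a finite Gaussian-polynomial sum over $k\le N/2$ that we expect to be evaluable. The alternative is to sum $j$ for fixed $k$ via the $q$-Chu--Vandermonde or $q$-Gauss sum in base $q^2$, which leaves a single sum over $k$ of the shape
\[
\sum_k \frac{(-1)^{?}\,q^{8k^2+\cdots}\,(\ldots;q^4)_k}{(q^4;q^4)_k(\ldots)}.
\]
Concretely, I would try to show that for each fixed $N$ the coefficient obtained after the $i$- and $j$-summations matches a known Bailey pair relative to $a=1$ or $a=q$ in base $q^2$ or $q^4$. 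The most likely candidates are those from Slater's list with modulus $16$, or pairs whose $\alpha_n$ involve $q^{8n^2}$ together with a quintuple-type sign pattern $(-1)^n q^{\cdots}(1\pm q^{\cdots})$.

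\textbf{Step 3: close the argument.} Once such a pair $(\alpha_n,\beta_n)$ is identified, insert it into the limiting form of Bailey's lemma,
\[
\sum_n a^n q^{n^2}\beta_n=\frac{1}{(aq;q)_\infty}\sum_n a^n q^{n^2}\alpha_n,
\]
or into one of its variants with $\rho_1$ or $\rho_2$ finite, such as $\rho_1=-\sqrt{aq}$. The resulting bilateral series in $\alpha$ is then summed by the quintuple product identity and equated with
\[
\frac{\mathcal{Q}(q^m;q^{16})}{(q^2;q^2)_\infty},\qquad m=1,3,7.
\]
The prefactors $(-q^a;q)_\infty$ from Step~1 and $1/(aq;q)_\infty$ from the Bailey lemma must combine to $1/(q^2;q^2)_\infty$ up to factors that cancel against the theta quotient.

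\textbf{Main obstacle.} The hard step is Step~2: establishing the finite identity, or equivalently identifying the correct Bailey pair, that handles the cross terms $4jk$ and $16\binom{k}{2}$ with the mixed bases $q^2$ and $q^4$. I would first compute $\beta_n$ numerically for small $n$ and match it against the modulus-$16$ and modulus-$32$ Bailey pairs in Slater's list, after changing base $q\to q^2$. If no match is found, I would instead prove a terminating Gaussian-polynomial identity by induction using the $q$-Pascal recurrences. The three choices of $(a,b,c)$ are then expected to correspond to shifting the linear parameter, that is, to the same pair with $a\to aq^{2}$ or a dual pair, so that one argument covers all three identities up to routine bookkeeping.
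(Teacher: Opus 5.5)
Your Step~1 and the substitution $N=j+2k$ are exactly how the paper starts. They give
\[
F_2(a,b,c)=(-q^a;q)_\infty\sum_{N\ge0}\frac{(-1)^Nq^{N^2+(b-1)N}}{(-q^a;q)_{2N}}\sum_{0\le k\le N/2}\frac{q^{4k^2+(c-2b-6)k}}{(q^2;q^2)_{N-2k}(q^4;q^4)_k}.
\]
But the proposal stops where the actual proof begins: you never say what the inner sum is or which Bailey pair it belongs to, and you flag this yourself as the main obstacle. The inner sum has no closed form. It equals $D_N^{(\sigma)}(q^2)/(q^2;q^2)_N$, where $D_N^{(\sigma)}(q)=\sum_r q^{2r(r+\sigma)}(q;q^2)_r{N\brack 2r}_q$, with $\sigma=0$ for $(1,3,12)$ and $(1,1,8)$, and $\sigma=1$ for the third identity. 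The missing idea is the pair of bilateral Gaussian expansions
\begin{align*}
D_N^{(0)}(q)&=\sum_{j\in\mathbb{Z}}q^{(15j^2+j)/2}\Bigl({2N\brack N-3j}_q-q^{5j+1}{2N\brack N-3j-1}_q\Bigr),\\
D_N^{(1)}(q)&=\sum_{j\in\mathbb{Z}}q^{(15j^2+7j)/2}\Bigl({2N+1\brack N-3j}_q-q^{3j+1}{2N+1\brack N-3j-1}_q\Bigr).
\end{align*}
These expansions put $D_N^{(0)}(q)/(q;q)_{2N}$ and $D_N^{(1)}(q)/(q^2;q)_{2N}$ on the $\beta$-side of Bailey pairs relative to $1$ and $q$. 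The corresponding $\alpha_N$ is defined piecewise on $N\bmod 3$, with leading exponent $\tfrac{15}{2}m^2$ at $N\approx 3m$, not the $q^{8n^2}$ shape you guess. These are not standard Slater pairs. The paper derives them as $L\to\infty$ limits of Warnaar's finite identities for refined $q$-trinomial coefficients, with an extra step for $\sigma=1$ that lowers the upper index from $2N+2$ to $2N+1$. Without them, ``prove a terminating identity by induction'' has no target. Your fallback of summing over $j$ by $q$-Chu--Vandermonde or $q$-Gauss does not work either: for fixed $k$ the $j$-sum pits $q^{j^2}$ against $(q^2;q^2)_j(-q^{a+4k};q)_{2j}$, and it is not a summable series.

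Your hope that one pair covers all three identities ``up to routine bookkeeping'' is also too optimistic. The paper uses three different pairs:
\begin{itemize}
\item For $(1,1,8)$: the pair relative to $a=1$ with $\beta_N=D_N^{(0)}(q)/(q;q)_{2N}$.
\item For $(1,3,12)$: the pair relative to $a=1$ with $\beta_N=q^ND_N^{(0)}(q)/(q;q)_{2N}$. This one needs its own rearrangement of the bilateral sum before $\alpha_N$ can be read off.
\item For the third identity: here $(-q^3;q)_\infty/(-q^3;q)_{2N}=(-q;q)_\infty/(-q;q)_{2N+2}$ forces $a=q^2$ at base $q^2$. The paper uses the twisted pair relative to $q$ with $\beta_N=\frac{q^N}{1+q^{N+1}}D_N^{(1)}(q)/(q^2;q)_{2N}$. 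Its $\alpha_N$, with denominators $1+q^{3m+1}$ and $1+q^{3m+2}$, comes from an additional difference identity.
\end{itemize}
The first two pairs go into the transform (S2) with $a=1$ at base $q^2$, followed by $q\to-q$; the third goes into (S4) instead.

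Finally, the third identity must be read as $F_2(3,3,16)$, which is what the paper actually proves. With $c=6$ one gets $c-2b-6=-6$, so no $D_N^{(\sigma)}$ appears. In fact the printed version is false: the coefficient of $q^6$ is $2$ on the left and $1$ on the right.
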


\begin{theorem}\label{thm-3}
(Cf.~\cite[Conjecture 6.3]{TT2021}.) 
We have
\begin{align}
    F_3(1,5,1,12)=\chi_{A_{13}^{(2)}}(\Lambda_5)=\frac{1}{(q^2,q^4,q^8,q^{12},q^{14},q^5,q^{11};q^{16})_\infty}.  \label{thm3-id}
\end{align}
\end{theorem}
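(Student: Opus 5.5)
Our plan for \eqref{thm3-id} is to reduce the quadruple sum $F_3(1,5,1,12)$ step by step to a single series. At the end that series will be an instance of a Bailey pair relative to $a=1$ (or $a=q$) in base $q^4$, or in base $q^{16}$ after substitution, whose $\alpha_n$ is a quintuple-product-type theta coefficient.

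First we isolate the variables whose denominators are $(q;q)_i$ and $(q^2;q^2)_k$. The exponent is quadratic in $i$ with leading part $2\binom{i}{2}$ and cross terms $2ij+2ik+4i\ell$. So the sum over $i$ has the form
\[
\sum_i q^{i^2-i+i(1+2j+2k+4\ell)}/(q;q)_i=(-q^{1+2j+2k+4\ell};q^2)_\infty ,
\]
by Euler's identity with base $q^2$ after splitting $(q;q)_i$. That splitting is not directly available, so we use the standard route: write $q^{2\binom{i}{2}}/(q;q)_i$ against $(q^2;q^2)$ and apply $q$-binomial sums. Concretely, we sum over $i$ and $k$ together with $m=i+k$ fixed. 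The weight $(-1)^kq^{2\binom{i}{2}+2\binom{k}{2}+2ik}$ combines to $q^{2\binom{m}{2}}$ times a finite alternating Gaussian-polynomial sum in $k$. That finite sum is evaluated by the finite identities (the Gaussian polynomial identities the abstract alludes to, e.g.\ $\sum_k(-1)^k{m\brack k}_{q}$-type sums collapsing to $(q;q^2)$-products or vanishing for odd $m$). This leaves a triple sum over $m,j,\ell$ with denominators $(q^2;q^2)_m$ or $(q^4;q^4)_{m/2}$, $(q^2;q^2)_j$ and $(q^4;q^4)_\ell$.

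Next we repeat the same device on $(j,\cdot)$ to reach a double sum in base $q^4$ of the shape
\[
\sum_{n,\ell}\frac{q^{Q(n,\ell)}}{(q^4;q^4)_{n}(q^4;q^4)_{\ell}}\,(\text{linear products}).
\]
We then recognize the inner sum as $\beta_N$ of a known Bailey pair: either a Slater-type pair or one obtained from the unit pair by a Bailey lemma step with $\rho_1=-q^{2}$, $\rho_2\to\infty$. We insert it into the limiting Bailey lemma (one or two Bailey chain steps, matching the $16\binom{\ell}{2}$ quadratic term). This yields
\[
\frac{1}{(q^4;q^4)_\infty}\sum_n q^{c n^2+dn}\alpha_n .
\]
Finally, we split the bilateral sum according to $n$ modulo $2$ and apply the quintuple product identity in the form $\mathcal{Q}(x;q)=\sum_n q^{3n^2-n}\cdot\ldots(x^{3n}-x^{-3n-1})$. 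With $q\to q^{16}$, $x=q^5$, this produces $\mathcal Q(q^5;q^{16})/(q^2;q^2)_\infty$, which by the character formula with $\ell=7$, $m=5$ is exactly the product side.

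The main obstacle is the middle step: choosing the order of summation and the correct finite Gaussian-polynomial identity so that the $(-1)^k$ sign and the mixed bases $q,q^2,q^4$ collapse cleanly into a single Bailey pair. We would first test candidates numerically, comparing $q$-expansions to order about $q^{60}$ after each reduction. This identifies the intermediate double sum, after which we would search Slater's list (and pairs built from the Andrews--Gordon type series of Theorem~\ref{thm-1}, whose $F_1$ exponents closely resemble the reduced form) for the matching $\beta_n$. Indeed a promising shortcut is to show that summing out $i$ in $F_3$ transforms it into an $F_1$-like triple sum, and then reuse the pair employed for \eqref{thm1-id-2}, which has the same product side.
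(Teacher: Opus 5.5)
There is a genuine gap. Your predicted endpoint $\mathcal{Q}(q^5;q^{16})/(q^2;q^2)_\infty$ is correct, but the rest is a search strategy, and the one concrete reduction you commit to does not work.

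If you fix $m=i+k$, the diagonal terms do combine to $q^{2\binom{m}{2}+m}$, and $4i\ell+4k\ell=4m\ell$. However, $2ij+4jk=2mj+2jk$ leaves a factor $q^{2jk}$, and the denominators are in different bases. So the inner sum is
\[
\sum_{k=0}^{m}\frac{(-1)^kq^{2jk}}{(q;q)_{m-k}(q^2;q^2)_k}=\frac{1}{(q;q)_m}\sum_{k=0}^{m}{m\brack k}_q\frac{(-1)^kq^{2jk}}{(-q;q)_k}.
\]
This is not a Gauss-type alternating sum and does not collapse: already at $j=0$, $m=1$ it equals $q/(1-q^2)\neq 0$. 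Your shortcut of summing out $i$ fails for the reason you yourself noted, since $q^{2\binom{i}{2}}/(q;q)_i$ is not Euler-summable, so there is no route to $F_1(4,2,6)$ either. Everything after that (``repeat the same device'', ``recognize the inner sum as $\beta_N$ of a known pair'', ``search Slater's list'') leaves the Bailey pair unidentified. Identifying that pair is the entire substance of the proof.

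Here is what is missing.
\begin{itemize}
\item Group $m=i+2j$ and $n=k+2\ell$ instead. Then the mixed terms $2ik+4i\ell+4jk+8j\ell$ become $2mn$, $(-1)^k=(-1)^n$, and the exponent is $\binom{m}{2}+m+\binom{i}{2}+2j+2mn+n^2+4\ell^2+4\ell$.
\item By \eqref{eq-cm} the $(i,j)$-block is $c_m=1/(q;q)_m-1/(q;q)_{m-2}$. The $m$-sum is then done by \eqref{Euler2} and gives $(-q;q)_\infty(1+q^{2n+1}+q^{2n+2})/(-q;q)_{2n+2}$.
\item The $(k,\ell)$-block is $D_n^{(1)}(q^2)/(q^2;q^2)_n$.
\item The Bailey pairs for $D_n^{(1)}$ are relative to $a=q$, used after $q\to q^2$, so base $q^2$ rather than $q^4$ or $q^{16}$. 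They come from Warnaar's finite identity \eqref{Dn1-id}, not from Slater's list.
\item Writing $1+q^{2n+1}+q^{2n+2}=(1+q^{2n+2})\bigl(1+q^{2n+1}/(1+q^{2n+2})\bigr)$ forces a split into the two pairs \eqref{BP6} and \eqref{BP7}. Their $\alpha_n$ have period $3$ in $n$, not $2$.
\item Applying \eqref{S4-aq-limit-negative} turns the two pieces into $\mathcal{Q}(q^5;q^{16})-q\mathcal{Q}(q^7;q^{16})$ and $q\mathcal{Q}(q^7;q^{16})$, each over $(q^2;q^2)_\infty$.
\end{itemize}
The product side arises only through this cancellation between two quintuple products, and nothing in your plan would lead you to it.
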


In 2026, Takigiku and Tsuchioka~\cite{TT2026} established families of identities for certain level $2$ characters of $A_{6n-1}^{(2)}$ and $A_{6n+1}^{(2)}$. These families recover several known partition identities, including the G\"ollnitz--Gordon  identities modulo $8$ (see, e.g.,~\cite[Sec.~2.4.4]{Sills-book}), the Rogers--Ramanujan  identities modulo $5$ in \eqref{RR}, and some of Nandi's identities modulo $14$  \cite{Nandi-thesis}. They \cite[Conjecture 1.3]{TT2026} further conjectured an analogous family for $A_{6n+3}^{(2)}$. The second aim of this paper is to prove this conjecture, which we restate below.

\begin{theorem}\label{thm-general}
(Cf.~\cite[Conjecture 1.3]{TT2026}) Let $n\geq 1$ and $0\le b\le2n$ be integers. We denote
$L_r=\ell_r+\cdots+\ell_{2n}$ for $1\le r\le2n$, and define
\begin{align}
 Q_n(i,j,k,\boldsymbol\ell)
 & =n(i+3j+3k)^2+2\sum_{r=1}^{2n}L_r^2
     +2(i+3j+3k)\sum_{r=1}^{2n}L_r+2iL_{2n},\label{eq-quadratic}\\
 R_{n,b}(i,j,k,\boldsymbol\ell)
 & =(b+2)i+(3b+2)j+(3b+4)k
     +2\sum_{r=2n+1-b}^{2n}L_r.\label{eq-linear}
\end{align}
We have
\begin{align}\label{eq-thm-general}
 &\sum_{i,j,k,\ell_1,\ldots,\ell_{2n}\geq 0}
 \frac{q^{Q_n(i,j,k,\boldsymbol\ell)+R_{n,b}(i,j,k,\boldsymbol\ell)}}
 {(q^2;q^2)_i(q^6;q^6)_j(q^6;q^6)_k
   \prod_{r=1}^{2n}(q^2;q^2)_{\ell_r}} \nonumber \\
&=\chi_{A_{6n+3}^{(2)}}(\Lambda_{n+2+b}')=\frac{\mathcal{Q}(q^{n+b+2};q^{6n+6})}{(q^2;q^2)_\infty}.
\end{align}
\end{theorem}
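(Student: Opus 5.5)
We will separate the sum into an inner part over $\boldsymbol\ell$ and an outer part over $(i,j,k)$. Put $M=i+3j+3k$. In base $q^2$, the $\boldsymbol\ell$-dependent part of the exponent has the form $\sum_r L_r^2+M\sum_r L_r+iL_{2n}+\sum_{r\ge 2n+1-b}L_r$. This is an Andrews--Gordon type multisum in which $M$ plays the role of a shift. The plan is therefore to treat the $\boldsymbol\ell$-sum as the result of applying the Bailey lemma $2n$ times, i.e. iterating a Bailey chain relative to $a=q^{2M}$ with base $q^2$. The extra linear terms $2\sum_{r\ge 2n+1-b}L_r$ correspond to switching to the chain variant (Bressoud's or the Bailey lattice) at step $2n-b$. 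The term $2iL_{2n}$ will be handled by absorbing $q^{2iL_{2n}}$ into the initial Bailey pair.

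\textbf{Step 1.} Find a seed Bailey pair $(\alpha_N,\beta_N)$ relative to $a=q^{2M}$. Its $\beta$ side must encode the outer triple sum over $(i,j,k)$ with $i+3j+3k$ fixed in terms of $N$. Concretely, we would try to identify
\[
\sum_{i+3j+3k \text{ constrained}}\frac{q^{(\ldots)}}{(q^2;q^2)_i(q^6;q^6)_j(q^6;q^6)_k}
\]
with a finite sum whose evaluation yields an $\alpha_N$ supported on a cubic-type pattern. The factors $(q^6;q^6)$ suggest a base change $q^2\to q^6$, so the relevant finite identity will involve Gaussian polynomials and a $q\to q^3$ cubic summation. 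Our first attempt is to sum over $j+k$ with $j,k$ split, using the $q$-binomial theorem. The sum over $i$ with $(q^2;q^2)_i$ and the factor $q^{2iL_{2n}}$ should then collapse to a Gaussian polynomial via the $q$-Chu--Vandermonde identity. The aim is an $\alpha_r$ of the shape $q^{c r^2+dr}(1\pm q^{er})$ times a simple factor.

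\textbf{Step 2.} Run the $2n$ steps of the Bailey chain and let the outer length tend to infinity. This produces
\[
\frac{1}{(q^2;q^2)_\infty}\sum_{r}q^{(2n)\cdot(\text{quadratic in } r)}\,\alpha_r .
\]
Choosing the chain/lattice switch point according to $b$ gives the linear term $(b+2)i+\cdots$ and, on the $\alpha$ side, the exponent shift depending on $b$.

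\textbf{Step 3.} Recognize the resulting bilateral series as the quintuple product $\mathcal{Q}(q^{n+b+2};q^{6n+6})$. We would write $\alpha_r$ as a two-term expression, split into $r\ge 0$ and $r<0$ pieces, and match the result with the series form
\[
\mathcal{Q}(x;q)=\sum_{m}q^{m(3m+1)/2}(x^{3m}-x^{-3m-1}),
\]
taken with base $q^{6n+6}$ (up to normalization).

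The main obstacle is Step 1. Finding the correct seed Bailey pair means establishing a finite identity in Gaussian polynomials that turns the weighted triple sum over $(i,j,k)$, with its mixed bases $q^2,q^6$ and the coupling $2iL_{2n}$, into a closed $\alpha_r$. To get there, we would first compute the cases $n=1$ with small $b$ numerically to guess $\alpha_r$. We would then prove the finite identity by induction on $N$, or via a Sears/$q$-Pfaff--Saalschütz transformation, possibly applying the cubic identity with base $q^6$.
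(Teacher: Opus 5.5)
There is a genuine gap, and it is structural rather than a matter of finding the right finite identity. Your Bailey parameter $a=q^{2M}$ (or $q^{2M+2}$ once the lattice term $-(N_1+\cdots+N_{2n-b})$ is used) depends on $(i,j,k)$. The lattice can therefore only be applied to the $\boldsymbol\ell$-sum for each fixed $(i,j,k)$. With $N_r=L_r$, the lattice denominators account only for $\ell_1,\dots,\ell_{2n-1}$, so the seed is forced to be $\beta_N=q^{2iN}/(q^2;q^2)_N$ relative to this $M$-dependent $a$. This causes three problems:
\begin{itemize}
\item its $\alpha_N$ is a terminating ${}_2\phi_1$ in $q^{2i}$ and $q^{2M}$, with no closed form in general;
\item the limit carries the $M$-dependent prefactor $1/(aq^2;q^2)_\infty$, not the $1/(q^2;q^2)_\infty$ you write in Step~2;
\item the triple sum over $(i,j,k)$ is still untouched afterwards.
\end{itemize}
So Step~1 asks for two incompatible things. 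A $\beta$-side relative to an $(i,j,k)$-dependent $a$ cannot encode the sum over $(i,j,k)$, and in the original variables nothing ties $i+3j+3k$ to $N=L_{2n}$.

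The missing idea is to complete the square before invoking Bailey. With $d=i+3j+3k$ and $K_r=L_r+d/2$ one has
\[
\tfrac12\bigl(Q_n+R_{n,b}\bigr)=\sum_{r=1}^{2n}K_r^2+\sum_{r=2n-b+1}^{2n}K_r+iL_{2n}+i+j+2k,
\]
together with $K_r-K_{r+1}=\ell_r$ and $2K_{2n}=i+3j+3k+2\ell_{2n}$. This also explains the linear coefficients, since $(b+2)i+(3b+2)j+(3b+4)k=bd+2(i+j+2k)$.

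Now split by the parity $\sigma$ of $d$ and write $K_r=N_r+\sigma/2$. The lattice (base $q^2$, $k=2n$, $j=2n-b$) then applies with the \emph{fixed} parameter $a=q^{2(\sigma+1)}$. The seed is $\beta_N=\mathcal{B}_{2N+\sigma}(q^2)$, a sum over $i+3j+3k+2h=2N+\sigma$ with weight $q^{2(ih+i+j+2k)}$. Thus $\ell_{2n}=h$ and the cross term $2iL_{2n}$ belong to the seed, not to the chain.

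The seed's $\alpha$ comes from the generating function $\sum_M\mathcal{B}_M(q)x^M=(\omega xq,\omega^2xq;q)_\infty/(x^2;q)_\infty$. One uses the double-sum identity for $G(a,x)$ at $a=\omega q,\omega^2q$ and then the reflection $m\mapsto-m-2$. This gives
\[
\alpha_r^{(\sigma)}=\frac{1-q^{2r+\sigma+1}}{(q;q)_{\sigma+1}}\,\eta_{2r+\sigma}\,q^{\binom{2r+\sigma+1}{2}},
\]
where $\eta_m=(-1)^{\lfloor m/3\rfloor}$ for $m\not\equiv2\pmod{3}$ and $\eta_m=0$ otherwise. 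The cubic structure enters through cube roots of unity rather than a base change to $q^6$, and no Gaussian-polynomial identity is needed.

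With this seed, your Steps~2 and~3 go through essentially as you describe. The factor $1-q^{2(2r+\sigma+1)}$ cancels the lattice denominator, and $(q^2;q^2)_{\sigma+1}$ combines with $(q^{2\sigma+4};q^2)_\infty$ to give $(q^2;q^2)_\infty$. The two parities merge into a sum over $m\ge0$, and the reflection $m\mapsto-m-2$ yields a bilateral series that the quintuple product identifies with $\mathcal{Q}(q^{n+b+2};q^{6n+6})$.
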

As pointed out in \cite{TT2026}, the cases
$(n,b)=(1,0),(1,2)$ recover two Kanade--Russell identities modulo 12  proved in \cite{BJSM,Rosengren}. 

The proofs of all the above theorems are based on Bailey pairs. A key clue that persuaded us to use this method is that the numerator in the character can be expanded as a theta series by the quintuple product identity as \cite[Theorem 1.3.17]{Berndt-book}
\begin{align}\label{eq-quintuple}
\mathcal{Q}(x;q)=\sum_{k\in\mathbb Z}(-1)^k q^{(3k^2-k)/2}
 (x^{3k}+x^{1-3k}).
\end{align}
This expansion allows the character to fit naturally into the $\alpha$-side of classical transformation formulas involving Bailey pairs (see, e.g.,~\eqref{BP-S2-a1-limit-negative} and \eqref{S4-aq-limit-negative}).  We then rewrite the multisums in equivalent forms so that they fit into the $\beta$-side and allow us to apply the Bailey pair method. The main difficult then lies in finding such equivalent expressions and establishing the corresponding Bailey pairs. In particular, the proof of Theorem~\ref{thm-general} draws on techniques used by Takigiku and Tsuchioka in their proof of the companion identity for $\chi_{A_{6n\pm 1}^{(2)}}(\Lambda_{n+b}')$ in \cite[Theorem 1.2]{TT2026}. However, our proof is essentially different from the proof strategy suggested by them \cite[Remark 3.1]{TT2026}.

The remainder of this paper is organized as follows. In Section \ref{sec-pre} we first review several basic $q$-series identities and relevant results from the theory of Bailey pairs, and establish a number of Bailey pairs needed for the proofs of our main theorems. We then prove Theorems~\ref{thm-1}--\ref{thm-3} in Section~\ref{sec-thm13} and the multisum identity in Theorem~\ref{thm-general} in Section~\ref{sec-thm-general}.

\section{Preliminaries}\label{sec-pre}
\subsection{Auxiliary $q$-series identities}
Recall Euler's $q$-exponential identities \cite[Corollary 1.3.2]{Berndt-book}
\begin{align}
\sum_{n=0}^\infty \frac{z^n}{(q;q)_n}&=\frac{1}{(z;q)_\infty}, \quad |z|<1,  \label{Euler}\\
\sum_{n=0}^\infty \frac{q^{\binom{n}{2}} z^n}{(q;q)_n}&=(-z;q)_\infty. \label{Euler2}
\end{align}
Both of them are corollaries of the $q$-binomial theorem \cite[Theorem 1.3.1]{Berndt-book}
\begin{align}\label{q-binomial}
    \sum_{n=0}^\infty \frac{(a;q)_n}{(q;q)_n}z^n=\frac{(az;q)_\infty}{(z;q)_\infty}, \quad |z|<1.
\end{align}

Let $(\alpha_n(a;q),\beta_n(a;q))$ be a Bailey pair relative to the parameter $a$ as defined in \eqref{BP-defn}. We shall review some useful consequences of Bailey's lemma (see, e.g., \cite{Bressoud2000}). First, we can construct new Bailey pairs $(\alpha_n',\beta_n')$ from a given Bailey pair $(\alpha_n,\beta_n)$ in various ways such as
\begin{equation}\label{S2}
\begin{split}
    \alpha_n'(a;q)&=a^{n/2}q^{n^2/2}\alpha_n(a;q), \\
    \beta_n'(a;q)&=\sum_{k=0}^n\frac{(-\sqrt{aq};q)_k}{(q;q)_{n-k}(-\sqrt{aq};q)_n}a^{k/2}q^{k^2/2}\beta_k(a;q);
\end{split} \tag{S2}
\end{equation}
\begin{equation}\label{S4}
    \begin{split}
        \alpha_n'(a;q)&=\frac{(-aq^{1/2};q)_n}{(-q^{1/2};q)_n}q^{n^2/2}\alpha_n(a;q), \\
        \beta_n'(a;q)&=\sum_{k=0}^n\frac{(-aq^{1/2};q)_k}{(q;q)_{n-k}(-q^{1/2};q)_n}q^{k^2/2}\beta_k(a;q).
    \end{split}\tag{S4}
\end{equation}
If we set $a=1$ in \eqref{S2}, then we obtain a Bailey pair
\begin{equation}\label{BP-S2-a1}
\begin{split}
    \alpha_n'(1;q)&=q^{n^2/2}\alpha_n(1;q), \\
    \beta_n'(1;q)&=\sum_{k=0}^n \frac{(-q^{1/2};q)_kq^{k^2/2}}{(q;q)_{n-k}(-q^{1/2};q)_n}\beta_k(1;q).
\end{split}
\end{equation}
Let $n\rightarrow \infty $. We obtain the transformation formula
\begin{align}\label{BP-S2-a1-limit}
    \sum_{n=0}^\infty (-q^{1/2};q)_nq^{n^2/2}\beta_n(1;q)=\frac{(-q^{1/2};q)_\infty}{(q;q)_\infty}\sum_{n=0}^\infty q^{n^2/2}\alpha_n(1;q).
\end{align}
Replacing $q$ by $q^2$ and then replacing $q$ by $-q$, we obtain
\begin{align}\label{BP-S2-a1-limit-negative}
    \sum_{n=0}^\infty (-1)^nq^{n^2} (q;q^2)_n\beta_n(1;q^2)=\frac{(q;q^2)_\infty}{(q^2;q^2)_\infty}\sum_{n=0}^\infty (-1)^nq^{n^2}\alpha_n(1;q^2).
\end{align}
Similarly, if we set $a=q$ in \eqref{S2},  and then let $n\rightarrow \infty $, we obtain the formula
\begin{align}\label{BP-S2-aq-limit}
    \sum_{n=0}^\infty (-q;q)_nq^{n(n+1)/2}\beta_n(q;q)=\frac{(-q;q)_\infty}{(q^2;q)_\infty}\sum_{n=0}^\infty q^{n(n+1)/2}\alpha_n(q;q).
\end{align}

If we set $a=q$ in \eqref{S4} and then letting $n\rightarrow \infty$, we obtain
\begin{align}\label{S4-aq-infinite}
    \sum_{n=0}^\infty q^{\frac{1}{2}n^2}(-q^{\frac{3}{2}};q)_n\beta_n(q;q)=\frac{(-q^{\frac{1}{2}};q)_\infty}{(q^2;q)_\infty} \sum_{n=0}^\infty \frac{1+q^{n+\frac{1}{2}}}{1+q^{\frac{1}{2}}}q^{\frac{1}{2}n^2}\alpha_n(q;q).
\end{align}
Replacing $q$ by $q^2$ and then replacing $q$ by $-q$, we obtain
\begin{align}\label{S4-aq-limit-negative}
    \sum_{n=0}^\infty (-1)^nq^{n^2}(q^{3};q^2)_n\beta_n(q^2;q^2)=\frac{(q;q^2)_\infty}{(q^4;q^2)_\infty} \sum_{n=0}^\infty \frac{1-q^{2n+1}}{1-q}(-1)^nq^{n^2}\alpha_n(q^2;q^2).
\end{align}

Similarly, from \cite[(S6)]{Bressoud2000} we deduce the following transformation formula 
\begin{align}\label{S6-a1-limit}
    \sum_{n=0}^\infty q^{(n^2+n)/2}(-1;q)_n\beta_n(1;q)=\frac{(-q;q)_\infty}{(q;q)_\infty}\sum_{n=0}^\infty \frac{(-1;q)_n}{(-q;q)_n}q^{(n^2+n)/2}\alpha_n(1;q).
\end{align}
The above transformation formulas played a key role in proving Theorems \ref{thm-1}--\ref{thm-3}. As for the proof of Theorem \ref{thm-general}, we will use the following form of the Bailey lattice which was also used in the work of Takigiku and Tsuchioka \cite[Theorem~2.4]{TT2026}.
\begin{lemma}\label{lem-lattice}
(Cf.~\cite[Corollary~4.2]{AAB}.) Suppose $(\alpha_r,\beta_r)_{r\geq 0}$ is a Bailey pair relative to $a$. For integers $k\geq 1$ and $0\leq j\leq k$, we have
\begin{align}
 &\sum_{N_1\geq\cdots\geq N_k\geq 0}
 \frac{q^{N_1^2+\cdots+N_k^2-(N_1+\cdots+N_j)}
       a^{N_1+\cdots+N_k}}
 {(q;q)_{N_1-N_2}\cdots(q;q)_{N_{k-1}-N_k}}\beta_{N_k}\nonumber\\
 &=
 \frac{1}{(aq;q)_\infty}
 \sum_{r\geq 0}a^{kr}q^{kr^2-jr}
 \frac{1-a^{j+1}q^{2r(j+1)}}{1-aq^{2r}}\alpha_r.
 \label{eq-lattice}
\end{align}
\end{lemma}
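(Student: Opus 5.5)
Lemma~\ref{lem-lattice} is the Bailey lattice of Agarwal--Andrews--Bressoud, so I would prove it by iterating Bailey's lemma, changing the parameter once in the middle. The plan is to build the pair after $k$ steps explicitly and then let $n\to\infty$.

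\textbf{The two basic steps.} The first step is the standard Bailey lemma with $\rho_1,\rho_2\to\infty$, applied with parameter $a$:
\[
\alpha_n'=a^nq^{n^2}\alpha_n,\qquad
\beta_n'=\sum_{m\le n}\frac{a^mq^{m^2}}{(q;q)_{n-m}}\beta_m .
\]
The second step is the ``lattice'' move, which turns a pair relative to $a$ into a pair relative to $a/q$. Following \cite{AAB}, if $(\alpha_n,\beta_n)$ is a Bailey pair relative to $a$, then
\[
\alpha_n''=(1-a)\left(\frac{a^nq^{n^2}\alpha_n}{1-aq^{2n}}-\frac{a^{n-1}q^{(n-1)^2}\alpha_{n-1}}{1-aq^{2n-2}}\right)\cdot(\text{normalising factors}),\qquad
\beta_n''=\sum_{m\le n}\frac{a^mq^{m^2-m}}{(q;q)_{n-m}}\beta_m
\]
is a Bailey pair relative to $a/q$. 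I would verify this directly from \eqref{BP-defn}. Substitute $\beta_m$, interchange the sums, and reduce the inner sum over $m$ to a terminating ${}_6\phi_5$- or $q$-Pfaff--Saalsch\"utz type evaluation; this produces the telescoping two-term $\alpha''$. This verification is the step I expect to be the main obstacle: it needs careful bookkeeping with $(aq;q)_{n+r}$ versus $(a;q)_{n+r}$.

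\textbf{Iteration.} Order the $k$ steps so that the factors $q^{-N_i}$ appear on the outermost sums $N_1,\dots,N_j$. Concretely, start from $(\alpha,\beta)$ relative to $a$ and apply the ordinary step $k-j$ times, producing $q^{N_{j+1}^2+\dots+N_k^2}a^{N_{j+1}+\dots}$. Then apply one lattice step. Finally apply $j-1$ further ordinary steps relative to $a/q$; written in terms of the original $a$, each contributes $a^{N}q^{N^2-N}$. The resulting $\beta$ is exactly the $N_1$-indexed finite sum on the left of \eqref{eq-lattice}, with $N_1=n$, multiplied by $1/(q;q)_{n-N_1}$.

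The $\alpha$ side picks up $a^{kr}q^{kr^2-jr}$. The two-term difference from the lattice step, once pushed through the later steps, collapses into a finite geometric-type sum. I would identify that sum with $\frac{1-a^{j+1}q^{2r(j+1)}}{1-aq^{2r}}$, checking this by induction on $j$ via the identity
\[
\frac{1-x^{j+1}}{1-x}=1+x\cdot\frac{1-x^{j}}{1-x},\qquad x=aq^{2r}.
\]
When $j=0$ no lattice step is needed and the formula is the classical Andrews chain.

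\textbf{Limit.} Letting $n\to\infty$ in \eqref{BP-defn} for the final pair (relative to $a/q$ when $j\ge1$) gives $\beta_\infty=\frac{1}{(q;q)_\infty(a;q)_\infty}\sum_r\alpha_r''$. After reindexing the shifted $\alpha_{r-1}$ terms and absorbing the factor $(1-a)$, this turns into the prefactor $1/(aq;q)_\infty$. That yields \eqref{eq-lattice}, with all rearrangements justified by absolute convergence for $|q|<1$.
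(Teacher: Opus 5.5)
The paper does not prove this lemma; it quotes \cite[Corollary~4.2]{AAB}. Your outline is the Agarwal--Andrews--Bressoud construction behind that citation, and its architecture is right. You apply $k-j$ ordinary steps relative to $a$, giving weights $a^Nq^{N^2}$ on $N_k,\dots,N_{j+1}$. Then one change of parameter $a\to a/q$ gives weight $a^Nq^{N^2-N}$ on $N_j$. Then $j-1$ ordinary steps relative to $a/q$ give weights $(a/q)^Nq^{N^2}=a^Nq^{N^2-N}$. Finally $n\to\infty$, using $(1-a)/(a;q)_\infty=1/(aq;q)_\infty$.

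\textbf{The gap.} It lies in the one ingredient that goes beyond the classical chain: the lattice step as you wrote it is wrong. No common ``normalising factor'' can repair it. In your $\alpha''_n$, the coefficient of $\alpha_{n-1}$ relative to that of $\alpha_n$ (apart from the $1-aq^{2\cdot}$ denominators) is $-a^{-1}q^{1-2n}$, but it must be $-aq^{2n-2}$. This relative weight is exactly what produces the numerator $1-a^{j+1}q^{2r(j+1)}$, so it cannot be left vague. The correct statement (the $\rho,\sigma\to\infty$ case of the AAB lemma) is that, with $\alpha_{-1}=0$, the following is a Bailey pair relative to $a/q$:
\[
\beta_n''=\sum_{m=0}^n\frac{a^mq^{m^2-m}}{(q;q)_{n-m}}\beta_m,\qquad
\alpha_n''=(1-a)a^nq^{n^2-n}\Bigl(\frac{\alpha_n}{1-aq^{2n}}-\frac{aq^{2n-2}\alpha_{n-1}}{1-aq^{2n-2}}\Bigr).
\]

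\textbf{Verifying the step.} No ${}_6\phi_5$ is needed. Compare coefficients of $\alpha_r$ and put $A=aq^{2r}$, $N=n-r$, $m=r+s$. The claim reduces to
\[
\sum_{s=0}^{N}\frac{A^sq^{s^2-s}}{(q;q)_{N-s}(q;q)_s(Aq;q)_s}=\frac{1}{(q;q)_N(A;q)_N}-\frac{A^2}{(q;q)_{N-1}(A;q)_{N+1}}=\frac{1+A-Aq^N}{(q;q)_N(Aq;q)_N}.
\]
This follows from the $q$-Chu--Vandermonde evaluation
\[
\sum_s \frac{A^sq^{s^2}}{(q;q)_{N-s}(q;q)_s(Aq;q)_s}=\frac{1}{(q;q)_N(Aq;q)_N},
\]
the same one behind the ordinary step. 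Write $q^{-s}=1+q^{-s}(1-q^s)$ and shift $s\to s+1$ in the second piece.

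\textbf{The final factor.} No induction on $j$ or geometric sum is involved. After the lattice step and the $j-1$ later steps, the $\alpha_r$-part of $\alpha^{(k)}_r$ is
\[
(1-a)\,\frac{a^{kr}q^{kr^2-jr}}{1-aq^{2r}}\,\alpha_r .
\]
The $\alpha_r$-part of $\alpha^{(k)}_{r+1}$ carries the weight
\[
(a^{r+1}q^{r^2+r})^j\cdot aq^{2r}\cdot a^{(k-j)r}q^{(k-j)r^2}=a^{kr}q^{kr^2-jr}(aq^{2r})^{j+1},
\]
with the same factor $-(1-a)/(1-aq^{2r})$. A single reindexing therefore gives, at once,
\[
(1-a)\sum_r a^{kr}q^{kr^2-jr}\,\frac{1-(aq^{2r})^{j+1}}{1-aq^{2r}}\,\alpha_r .
\]
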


\subsection{Finite sum identities involving Gaussian polynomials} For integers $N,r$, the $q$-binomial coefficient or Gaussian polynomial is defined as:
\begin{align}
{N \brack r}={N \brack r}_{q}=
 \begin{cases}
\frac{(q;q)_N}{(q;q)_r(q;q)_{N-r}}, & 0\leq r\leq N,\\ 
 0,&\text{otherwise}.
 \end{cases}
\end{align}
It is well known that \cite[Sec.~1.1]{Berndt-book}
\begin{align}
 {N \brack r}_{q}&={N \brack N-r}_{q},\label{qbinomial-symmetry}\\
 {N \brack r}_{q}&={N-1 \brack r}_{q}+q^{N-r}{N-1 \brack r-1}_{q},\label{qbinomial-1}\\
 {N \brack r}_{q}&=q^r {N-1 \brack r}_{q}+{N-1 \brack r-1}_{q}.  \label{qbinomial-2}
\end{align}

For $L,M\in\mathbb{Z}_{\geq 0}$ and $a,b\in\mathbb{Z}$, we recall the following functions from Warnaar's work \cite[(2.10) and (2.25)]{Warnaar2003}:
\begin{align}
\mathscr{S}(L,M,a,b;q)&=\sum_{s=0}^{L}q^{s(s+a)}
 {L+M-a-2s \brack M}_{q}{M-a+b \brack s}_{q}{M+a-b \brack s+a}_{q},\label{eq-Sdef}\\
 \mathscr{V}(L,M,a,b;q)&=\mathscr{S}(L,M,a,b;q)+q^{b+1/2}\mathscr{S}(L,M,a+1,b+1;q).
 \label{eq-Vdef}
\end{align}

The last two Gaussian factors in \eqref{eq-Sdef} imply
\begin{equation}\label{eq-S-zero}
 \mathscr{S}(L,M,a,b;q)=0  \quad\text{if}\quad |b|>M\ \text{or}\ |a-b|>M.
\end{equation}

From \cite[(2.17)]{Warnaar2003} we know that if $M\in\mathbb{Z}_{\geq 0}$ and $a,b\in\mathbb{Z}$ satisfy $|a-b|\le M$, then
\begin{equation}\label{eq-S-limit}
 (q;q)_M\lim_{L\to\infty}\mathscr{S}(L,M,a,b;q)={2M \brack M-b}_{q}. 
\end{equation}

Next, we define an auxiliary polynomial by
\begin{align}\label{eq-Pdef}
 &P_n(x):=\sum_{i=0}^{n}q^{i(i-1)}(-q;q)_i {n \brack i}_{q^2}x^i=(q^2;q^2)_n\sum_{i=0}^{n}
 \frac{q^{i(i-1)}x^i}{(q;q)_i(q^2;q^2)_{n-i}}.
\end{align}

\begin{lemma}\label{lem-P-1}
For every $n\in\mathbb{Z}_{\geq 0}$,
\begin{align}
 P_n(q)&=\sum_{j\in\mathbb{Z}}(-1)^j \left\{q^{15j^2+2j} {2n \brack n-3j}_{q^2}
       +q^{15j^2+8j+1} {2n \brack n-3j-1}_{q^2}\right\},  \label{eq-W1}\\
 P_n(q^2)&=\sum_{j\in\mathbb{Z}}(-1)^j \left\{q^{15j^2+4j} {2n \brack n-3j}_{q^2}
       +q^{15j^2+14j+3}  {2n \brack n-3j-1}_{q^2}\right\}.  \label{eq-W2}
\end{align}
\end{lemma}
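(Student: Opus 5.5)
The plan is to treat both identities as polynomial identities in $q$ indexed by $n$. We show that both sides satisfy the same linear recurrence in $n$ with the same initial values. This is backed up by a Bailey-pair reformulation, which explains the shape of the right-hand sides.

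\emph{Reformulation.} Since ${2n \brack n-r}_{q^2}=(q^2;q^2)_{2n}/\big((q^2;q^2)_{n-r}(q^2;q^2)_{n+r}\big)$, dividing \eqref{eq-W1} by $(q^2;q^2)_{2n}$ turns it into the statement
\begin{align*}
\frac{P_n(q)}{(q^2;q^2)_{2n}}=\sum_{r=-n}^{n}\frac{\alpha_r}{(q^2;q^2)_{n-r}(q^2;q^2)_{n+r}},
\end{align*}
where $\alpha_r$ is supported on $r\equiv 0,1 \pmod 3$, with $\alpha_{3j}=(-1)^jq^{15j^2+2j}$ and $\alpha_{3j+1}=(-1)^jq^{15j^2+8j+1}$. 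Folding $r\mapsto -r$, this says that
\begin{align*}
\Big(\alpha_r+\alpha_{-r}\ (r>0),\ \alpha_0;\ P_n(q)/(q^2;q^2)_{2n}\Big)
\end{align*}
is a Bailey pair relative to $a=1$ with base $q^2$. Equation \eqref{eq-W2} has the same form with $x=q^2$. We do not need to construct these pairs from known ones; the reformulation only explains the $\alpha$-side and the splitting of $r$ into residue classes mod $3$.

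\emph{Recurrence for the left side.} From the second expression in \eqref{eq-Pdef}, $P_n(x)/(q^2;q^2)_n$ is the coefficient of $t^n$ in
\begin{align*}
\frac{1}{(t;q^2)_\infty}\sum_{i\ge 0}\frac{q^{i(i-1)}(xt)^i}{(q;q)_i}.
\end{align*}
Alternatively, apply \eqref{qbinomial-1} and \eqref{qbinomial-2} in base $q^2$ to ${n\brack i}_{q^2}$ inside $P_n(x)$. Either way we obtain, by creative telescoping in $i$ for the summand $q^{i(i-1)}(-q;q)_i{n\brack i}_{q^2}x^i$ at the specialization $x=q$ (resp.\ $x=q^2$), a short recurrence of the form $\sum_{s=0}^{d}c_s(q^n)P_{n-s}(x)=0$ with $d$ small (I expect $d\le 3$). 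The factor $(-q;q)_i$ makes consecutive terms differ by $(1+q^{i+1})$, which produces polynomial coefficients in $q^n$.

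\emph{Recurrence for the right side.} We then check that the right-hand sides of \eqref{eq-W1} and \eqref{eq-W2} satisfy the same recurrence. Write each as a sum over $r=3j$ and $r=3j+1$ of terms $w_r\,{2n\brack n-r}_{q^2}$. Apply \eqref{qbinomial-1} and \eqref{qbinomial-2} repeatedly to express ${2n\brack n-r}_{q^2}$ through ${2n-2s\brack n-s-r'}_{q^2}$ with $|r'-r|\le s$. Then verify that the recurrence operator applied to the bilateral sum telescopes. The contributions with index $r'$ coming from $r=3j$ and $r=3j+1$ must cancel against shifted terms from $j\pm1$, using the quadratic relations among the exponents $15j^2+2j$, $15j^2+8j+1$ (resp.\ $15j^2+4j$, $15j^2+14j+3$). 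The initial values $n=0,1,2$ are checked directly; for instance $P_0=1$, $P_1(q)=1+q(1+q)$, and only $j=0$ and $j=-1$ contribute for small $n$.

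This right-hand verification is the main obstacle. The bilateral sum has two interleaved pieces, and the telescoping must match terms across residue classes mod $3$ rather than termwise. What I would try first is to guess the recurrence numerically from the left side. Then I would find the explicit certificate, a rational function in $q^n$ and $q^j$, for each of the two pieces of the right side separately, allowing boundary terms that pair the class-$0$ piece at $j$ with the class-$1$ piece at $j$ or $j-1$. If this becomes unwieldy, the fallback is Warnaar's machinery. Express $P_n(x)$ as a finite combination of the functions $\mathscr{S}(L,M,a,b;q^2)$ or $\mathscr{V}$ of \eqref{eq-Sdef} and \eqref{eq-Vdef}, and apply \eqref{eq-S-limit} as $L\to\infty$. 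Warnaar's identities produce precisely such theta-type sums of Gaussian polynomials ${2M\brack M-b}$, with vanishing guaranteed by \eqref{eq-S-zero}.
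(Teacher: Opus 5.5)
Your proposal has a genuine gap: the recurrence argument is outlined but never carried out, and the step that holds all the content is left open. You write down no recurrence for $P_n(q)$ or $P_n(q^2)$ (you only ``expect $d\le 3$''). You give no telescoping certificate for the right-hand sides, and you yourself call that verification ``the main obstacle''.

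The Bailey-pair reformulation only restates \eqref{eq-W1}. The checks at $n\le 2$ are consistent with the identities but establish nothing. Even as initial data they suffice only once you know the common recurrence has order at most $3$ and a leading coefficient that does not vanish for larger $n$.

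The method is viable in principle. Each of the two bilateral pieces, e.g.\ $\sum_j(-1)^jq^{15j^2+2j}{2n\brack n-3j}_{q^2}$, is a definite sum of a proper $q$-hypergeometric term in $(n,j)$. So $q$-Zeilberger gives an annihilator for each piece separately, and the cross-residue-class matching you worry about is unnecessary. You would then compare with the left side via a common left multiple and enough initial values. But the proof consists precisely of these computations, and none of them is present.

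The paper's proof is essentially your fallback, but it rests on two specific identities of Warnaar that your proposal does not identify. Note also that one does not expand $P_n(x)$ itself in the $\mathscr{S}$'s. Instead one uses a finite-$L$ identity whose other side is a finitization of $P_n$.

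For \eqref{eq-W1}, Warnaar's identity on p.~242 of his paper already states that the right side equals $\sum_{i}q^{i^2}{n\brack i}_q(-q^{n-i+1};q)_i$. This is $P_n(q)$ because $(-q;q)_i{n\brack i}_{q^2}={n\brack i}_q(-q^{n-i+1};q)_i$.

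For \eqref{eq-W2}, Warnaar's (5.11) equates $\sum_j(-1)^jq^{15j^2+4j}\{\mathscr{S}(L,n,5j+1,3j;q^2)+q^{10j+3}\mathscr{S}(L,n,5j+2,3j+1;q^2)\}$ with $\sum_i q^{i(i+1)}{L+n-i\brack L}_{q^2}{2L-i-1\brack i}_q$. Multiply by $(q^2;q^2)_n$ and let $L\to\infty$. Then \eqref{eq-S-limit} turns the first side into the right side of \eqref{eq-W2}. The second side tends to $(q^2;q^2)_n\sum_i q^{i(i+1)}/\big((q;q)_i(q^2;q^2)_{n-i}\big)=P_n(q^2)$.

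Supplying these identities, or explicit certificates for your recurrence, is the missing step.
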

\begin{proof}
From \cite[p.~242]{Warnaar2003} we find 
\begin{align}
   & \sum_{j\in\mathbb{Z}}(-1)^j \left\{q^{15j^2+2j} {2n \brack n-3j}_{q^2}
       +q^{15j^2+8j+1} {2n \brack n-3j-1}_{q^2}\right\} \nonumber \\
&=\sum_{i=0}^n q^{i^2}{ n\brack i}(-q^{n-i+1};q)_i.
\end{align}
This proves \eqref{eq-W1} after rewriting the right side as $P_n(q)$. 

From \cite[(5.11)]{Warnaar2003} we have
\begin{align}
 &\sum_{j\in\mathbb{Z}}(-1)^jq^{15j^2+4j} \Big\{\mathscr{S}(L,n,5j+1,3j;q^2)
 +q^{10j+3}\mathscr{S}(L,n,5j+2,3j+1;q^2)\Big\}\nonumber\\
 &\quad=\sum_{i=0}^{n}q^{i(i+1)} {L+n-i  \brack L}_{q^2}  {2L-i-1 \brack i}_{q}.
 \label{eq-Warnaar511}
\end{align}
Multiplying by $(q^2;q^2)_n$ and let $L\to\infty$, using \eqref{eq-S-limit} with $q$ replaced by $q^2$, we see that its left side agrees with the right side in \eqref{eq-W2}. 

On the unilateral sides, as $L\to\infty$ we have
\begin{align*}
{L+n-i \brack L}_{q^2}\longrightarrow\frac{1}{(q^2;q^2)_{n-i}},\quad
 {2L-i-1 \brack i}_{q}\longrightarrow\frac{1}{(q;q)_i}.
\end{align*}
Thus the limit of the right side of \eqref{eq-Warnaar511} is $P_n(q^2)$. This proves \eqref{eq-W2}.
\end{proof}

\begin{lemma}\label{lem-P-2}
For every $n\in\mathbb{Z}_{\geq 0}$,
\begin{align}
 (1+q^{2n})P_n(1) &=\sum_{j\in\mathbb{Z}}(-1)^j
 \Biggl\{ \big(q^{15j^2+2j}+q^{15j^2+4j}\big) {2n \brack n-3j}_{q^2}
 \nonumber\\
 & \quad +\big(q^{15j^2+8j+1}+q^{15j^2+14j+3}\big){2n \brack n-3j-1}_{q^2}
 \Biggr\},\label{eq-finite3}\\
 P_n(q^2)
 &=\sum_{j\in\mathbb{Z}}(-1)^j \left\{q^{15j^2+4j} {2n+1 \brack n-3j}_{q^2} +q^{15j^2+14j+3} {2n+1 \brack n-3j-1}_{q^2} \right\},\label{eq-finite5}\\
 q^{2n}P_n(q^2)
 &=\sum_{j\in\mathbb{Z}}(-1)^jq^{15j^2+10j} \left\{ {2n+1 \brack n-3j}_{q^2}
       -{2n+1 \brack n-3j-1}_{q^2}\right\}.
 \label{eq-finite7}
\end{align}
\end{lemma}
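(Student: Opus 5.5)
Each of the three identities will be obtained from Lemma~\ref{lem-P-1} using only the Pascal-type recurrences \eqref{qbinomial-1}--\eqref{qbinomial-2} (in base $q^2$) together with the definition \eqref{eq-Pdef} of $P_n$. The unifying observation is an elementary recurrence for $P_n$ in the variable $x$. By \eqref{qbinomial-2} in base $q^2$, ${n\brack i}_{q^2}=q^{2i}{n-1\brack i}_{q^2}+{n-1\brack i-1}_{q^2}$. Substituting this into \eqref{eq-Pdef} and using $q^{i(i-1)}(-q;q)_i=q^{(i-1)(i-2)}(-q;q)_{i-1}\cdot q^{2(i-1)}(1+q^i)$ yields relations of the shape
\[
P_n(x)=P_{n-1}(q^2x)+x\bigl(P_{n-1}(q^2x)+qP_{n-1}(q^3x)\bigr)\quad\text{(schematically)},
\]
together with the analogous relation coming from \eqref{qbinomial-1}. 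I will pin down the exact form and keep only the combinations actually needed below.

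For \eqref{eq-finite3}, note that the right side is exactly the sum of the right sides of \eqref{eq-W1} and \eqref{eq-W2}. It therefore suffices to prove the polynomial identity
\[
(1+q^{2n})P_n(1)=P_n(q)+P_n(q^2),
\]
which I plan to verify coefficientwise in $x^i$ from \eqref{eq-Pdef}. Write the coefficient of $P_n(1)$ as $c_i=q^{i(i-1)}(-q;q)_i{n\brack i}_{q^2}$. The identity then reads $\sum_i c_i(1+q^{2n})=\sum_i c_i(q^i+q^{2i})$. This does not hold termwise, so I will shift the index. Using ${n\brack i}_{q^2}(1-q^{2i})$ versus ${n\brack i-1}_{q^2}(1-q^{2n-2i+2})$, together with $c_i/c_{i-1}=q^{2(i-1)}(1+q^i)(1-q^{2n-2i+2})/(1-q^{2i})$, I expect $c_i(1+q^{2n}-q^i-q^{2i})$ to telescope into $d_i-d_{i+1}$ for an explicit $d_i$ with $d_0=d_{n+1}=0$. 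Finding the right $d_i$, most likely $d_i=c_i\cdot(\text{simple factor like }q^{2n}-q^{2i}\text{ over }1-q^i)$, is a routine but fiddly check.

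For \eqref{eq-finite5}, I will apply \eqref{qbinomial-2} in base $q^2$ to ${2n+1\brack m}_{q^2}$, splitting the right side into a ${2n\brack\,\cdot\,}$-sum, which reproduces \eqref{eq-W2} exactly, plus a remainder. The remainder has the form $\sum_j(-1)^j\{q^{15j^2+4j}{2n\brack n-3j-1}q^{2(n-3j)}\cdots\}$, and I must show it vanishes. The plan is to exhibit it as antisymmetric under the involution $j\mapsto -j-1$ (or a similar reflection), using \eqref{qbinomial-symmetry} to convert ${2n\brack n+3j+2}$ into ${2n\brack n-3j-2}$. Checking that the exponents $15j^2+4j$ and $15j^2+14j+3$ exchange correctly under this reflection is the key step.

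For \eqref{eq-finite7}, I will subtract the two splittings of ${2n+1\brack m}_{q^2}$ given by \eqref{qbinomial-1} and \eqref{qbinomial-2}, which gives the difference representation. Combining this with \eqref{eq-finite5} and the reflection argument above should produce $q^{2n}P_n(q^2)$, after absorbing the factor $q^{2(n-3j)}$ into the exponent $15j^2+4j+6j=15j^2+10j$. The exponents match suggestively, which supports this route.

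I expect the main obstacle to be \eqref{eq-finite3}: identifying the telescoping certificate $d_i$ for $(1+q^{2n})P_n(1)=P_n(q)+P_n(q^2)$. If a closed form does not emerge quickly, I would fall back on establishing a three-term recurrence in $n$ satisfied by both sides and checking the initial cases $n=0,1$.
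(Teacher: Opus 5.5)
Your plan is the paper's proof. You reduce \eqref{eq-finite3} to $P_n(q)+P_n(q^2)=(1+q^{2n})P_n(1)$ and add \eqref{eq-W1} and \eqref{eq-W2}. You then derive \eqref{eq-finite5} and \eqref{eq-finite7} from \eqref{eq-W2} by Pascal splitting, followed by a cancellation under $j\mapsto -j-1$ using \eqref{qbinomial-symmetry}. The recurrence $P_n(x)=(1+x)P_{n-1}(q^2x)+qxP_{n-1}(q^3x)$ you open with is correct but is never needed. Several details you wrote down are wrong, however, and as stated two of the checks would fail.

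\textbf{The certificate for \eqref{eq-finite3}.} It is simply $d_i=c_i(1-q^i)$, not your guess. Your own ratio $c_i/c_{i-1}$ is equivalent to $c_{i+1}(1-q^{i+1})=c_i(q^{2i}-q^{2n})$. Hence $c_i(1+q^{2n}-q^i-q^{2i})=d_i-d_{i+1}$ with $d_0=d_{n+1}=0$. This is exactly the paper's observation that $P_n(1)-P_n(q)$ and $P_n(q^2)-q^{2n}P_n(1)$ both equal $(q^2;q^2)_n\sum_{i=0}^{n-1}q^{i(i+1)}/\bigl((q;q)_i(q^2;q^2)_{n-1-i}\bigr)$.

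\textbf{The split for \eqref{eq-finite5}.} The split that leaves ${2n \brack r}_{q^2}$ with coefficient $1$ is \eqref{qbinomial-1}, namely ${2n+1 \brack r}_{q^2}={2n \brack r}_{q^2}+q^{2(2n+1-r)}{2n \brack r-1}_{q^2}$. The recurrence \eqref{qbinomial-2} attaches $q^{2r}$ to ${2n \brack r}_{q^2}$ and does not reproduce \eqref{eq-W2}. With \eqref{qbinomial-1}, the remainder is $\sum_j(-1)^j\bigl\{q^{15j^2+10j+2n+2}{2n \brack n-3j-1}_{q^2}+q^{15j^2+20j+2n+7}{2n \brack n-3j-2}_{q^2}\bigr\}$. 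It vanishes because $15(-j-1)^2+20(-j-1)+2n+7=15j^2+10j+2n+2$. With the factor $q^{2(n-3j)}$ you wrote, the exponents do not pair up under the reflection.

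\textbf{The difference step for \eqref{eq-finite7}.} Subtracting two splittings of the same coefficient only yields the trivial relation $(1-q^{2r}){2n \brack r}_{q^2}=(1-q^{4n+2-2r}){2n \brack r-1}_{q^2}$. Instead, expand ${2n+1 \brack r}_{q^2}$ by \eqref{qbinomial-2} and ${2n+1 \brack r-1}_{q^2}$ by \eqref{qbinomial-1}. The ${2n \brack r-1}_{q^2}$ terms then cancel, giving ${2n+1 \brack r}_{q^2}-{2n+1 \brack r-1}_{q^2}=q^{2r}{2n \brack r}_{q^2}-q^{4n+4-2r}{2n \brack r-2}_{q^2}$. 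With $r=n-3j$, your absorption of $q^{2(n-3j)}$ gives $q^{2n}q^{15j^2+4j}{2n \brack n-3j}_{q^2}$. After $j\mapsto -j-1$, the second term becomes $q^{2n}q^{15j^2+14j+3}{2n \brack n-3j-1}_{q^2}$, using $15(-j-1)^2+16(-j-1)+4=15j^2+14j+3$. The comparison is then made directly with \eqref{eq-W2}, and \eqref{eq-finite5} plays no role.
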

\begin{proof}
(1) The case $n=0$ is trivial. For $n\geq 1$, from \eqref{eq-Pdef}  we have
\begin{align}
 P_n(1)-P_n(q)=(q^2;q^2)_n\sum_{i=0}^{n-1}\frac{q^{i(i+1)}}{(q;q)_i(q^2;q^2)_{n-1-i}},\label{eq-Pdiff1}\\
 P_n(q^2)-q^{2n}P_n(1)=(q^2;q^2)_n\sum_{i=0}^{n-1}\frac{q^{i(i+1)}}{(q;q)_i(q^2;q^2)_{n-1-i}}.
 \label{eq-Pdiff2}
\end{align}
The right side of \eqref{eq-Pdiff1} and \eqref{eq-Pdiff2} are the same, and hence their left side also agree with each other. This implies
\begin{equation}\label{eq-P-relation}
 P_n(q)+P_n(q^2)=(1+q^{2n})P_n(1).  
\end{equation}
Adding \eqref{eq-W1} and \eqref{eq-W2} up, we obtain \eqref{eq-finite3}.

(2) Let $R_n$ denote its right-hand side of \eqref{eq-finite5}. Applying \eqref{qbinomial-1} to both Gaussian polynomial in \eqref{eq-W2}, we deduce that
\begin{align}
    P_n(q^2)&=\sum_{j\in \mathbb{Z}} (-1)^j \Big\{ q^{15j^2+4j}\Big( {2n+1 \brack n-3j}_{q^2}-q^{2(n+3j+1)}{2n\brack n-3j-1}_{q^2}\Big) \nonumber \\ &\quad +q^{15j^2+14j+3}\Big({2n+1 \brack n-3j-1}_{q^2}-q^{2(n+3j+2)}{2n \brack n-3j-2}_{q^2}\Big)\Big\} \nonumber \\
    &=R_n-\sum_{j\in \mathbb{Z}} (-1)^j \Big\{ q^{15j^2+10j+2n+2}{2n \brack n-3j-1}_{q^2}+q^{15j^2+20j+2n+7}{2n \brack n-3j-2}_{q^2}\Big\}. \nonumber 
\end{align}
Replacing $j$ by $-j-1$ in the last Gaussian polynomial, using \eqref{qbinomial-symmetry} and noting that
\begin{align*}
 15(-j-1)^2+20(-j-1)+2n+7=15j^2+10j+2n+2,
\end{align*}
we see that the two sums cancel with each other. This proves  \eqref{eq-finite5}.

(3) Using \eqref{qbinomial-2} for ${2n+1 \brack r}_{q^2}$ and
\eqref{qbinomial-1} for ${2n+1 \brack r-1}_{q^2}$, we see that their difference is
\begin{equation}\label{eq-Gauss-diff}
 {2n+1 \brack r}_{q^2}-{2n+1 \brack r-1}_{q^2}=q^{2r}{2n \brack r}_{q^2}-q^{4n+4-2r}{ 2n \brack r-2}_{q^2}.  
\end{equation}
If $T_n$ denotes the right-hand side of \eqref{eq-finite7},
substitution of \eqref{eq-Gauss-diff} with $r=n-3j$ yields
\begin{equation}\label{Tn-mid}
 T_n=q^{2n}\sum_{j\in\mathbb{Z}}(-1)^j\left\{q^{15j^2+4j}{2n \brack n-3j}_{q^2}
       -q^{15j^2+16j+4}{ 2n \brack n-3j-2}_{q^2}\right\}.
\end{equation}
Replacing $j$ by $-j-1$ for the second sum and noting that
\begin{align*}
 15(-j-1)^2+16(-j-1)+4=15j^2+14j+3,
\end{align*}
we conclude from \eqref{eq-W2} that 
\begin{align*}
 T_n&=q^{2n}\sum_{j\in\mathbb{Z}}(-1)^j\left\{q^{15j^2+4j}{2n \brack n-3j}_{q^2}
       +q^{15j^2+14j+3}{ 2n \brack n-3j-1}_{q^2}\right\}=q^{2n}P_n(q^2).
\end{align*}
This proves \eqref{eq-finite7}.
\end{proof}

\begin{corollary}\label{cor-BP123}
Let $\alpha_0^{(i)}=1$ ($i=1,2,3$) and for $n\geq 1$ we define
\begin{equation}\label{BP1}
\begin{split}
\alpha_n^{(1)}(1;q^2)&=\begin{cases}
\frac{(-1)^m}{2}q^{15m^2-4m}(1+q^{2m})(1+q^{6m}), & n=3m, \\
\frac{(-1)^m}{2}q^{15m^2+8m+1}(1+q^{6m+2}), & n=3m+1, \\
\frac{(-1)^{m+1}}{2}q^{15m^2+16m+4}(1+q^{6m+4}), & n=3m+2,
\end{cases} \\
\beta_n^{(1)}(1;q^2)&=\frac{1+q^{2n}}{2}\frac{P_n(1)}{(q^2;q^2)_{2n}};
\end{split}
\end{equation}
\begin{equation}\label{BP2}
\begin{split}
 \alpha_{n}^{(2)}(q^2;q^2)&=\begin{cases}
 (-1)^m q^{15m^2+4m}, &n=3m,    \\
(-1)^m q^{15m^2-4m}, &n=3m-1,    \\
 (-1)^m q^{15m^2+14m+3}(1-q^{2m+1}), &n=3m+1,
\end{cases} \\
 \beta_n^{(2)}(q^2;q^2)&=\frac{P_n(q^2)}{(q^4;q^2)_{2n}};
\end{split}
\end{equation}
\begin{equation}\label{BP3}
\begin{split}
\alpha_n^{(3)}(q^2;q^2)&=\begin{cases}
(-1)^m q^{15m^2+10m},  &n=3m,     \\
(-1)^m q^{15m^2-10m},  &n=3m-1,  \\
(-1)^{m+1}q^{15m^2+10m}(1-q^{10m+5}), &n=3m+1, 
\end{cases} \\
 \beta_n^{(3)}(q^2;q^2)&=q^{2n}\frac{P_n(q^2)}{(q^4;q^2)_{2n}}.  
\end{split}
\end{equation}
Then the above sequences $(\alpha_n^{(i)}(a;q^2),\beta_n^{(i)}(a;q^2))$ ($a=1,q^2$, $i=1,2,3$) form Bailey pairs relative to $a$ with base $q^2$.
\end{corollary}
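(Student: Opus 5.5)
The plan is to verify the defining relation \eqref{BP-defn} directly, in the equivalent form
\[
(aq^2;q^2)_{2n}\,\beta_n=\sum_{r=0}^n \frac{(aq^2;q^2)_{2n}}{(q^2;q^2)_{n-r}(aq^2;q^2)_{n+r}}\,\alpha_r .
\]
For $a=1$ the coefficient is $(q^2;q^2)_{2n}/((q^2;q^2)_{n-r}(q^2;q^2)_{n+r})={2n\brack n-r}_{q^2}$. For $a=q^2$ it is $(q^4;q^2)_{2n}/((q^2;q^2)_{n-r}(q^4;q^2)_{n+r})=(1-q^2){2n+1\brack n-r}_{q^2}/(1-q^{4n+2})\cdot\ldots$; more precisely, it can be written as ${2n+1\brack n-r}_{q^2}(1-q^{2(2r+1)})/(1-q^{2(2n+1)})$-type data. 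So in each case the claim reduces to matching a bilateral Gaussian sum from Lemmas~\ref{lem-P-1}--\ref{lem-P-2} with $\sum_r(\text{coefficient})\,\alpha_r$.

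\textbf{Pair (1).} The left side is $\frac{1+q^{2n}}{2}P_n(1)$. We use \eqref{eq-finite3} and fold the bilateral $j$-sum onto $r\ge0$.
\begin{itemize}
\item Terms with $n-3j$, $j=m\ge0$, give $r=3m$.
\item Terms with $j=-m$ and ${2n\brack n+3m}={2n\brack n-3m}$ also give $r=3m$. The exponents $15m^2\pm2m$ and $15m^2\pm4m$ combine to $q^{15m^2-4m}(1+q^{2m})(1+q^{6m})$; the $m=0$ term is counted once, which matches $\alpha_0=1$ after the factor $1/2$ is accounted for.
\item Terms with $n-3j-1$, $j=m\ge0$, give $r=3m+1$ with exponents $15m^2+8m+1$ and $15m^2+14m+3$.
\item Terms with $j=-m-1$ reflect to ${2n\brack n-3m-2}$, giving $r=3m+2$ with sign $(-1)^{m+1}$ and exponents $15m^2+16m+4$ and $15m^2+22m+8$.
\end{itemize}
Dividing everything by $2$ should reproduce $\alpha^{(1)}$ exactly. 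The routine check here is the exponent bookkeeping.

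\textbf{Pairs (2) and (3).} Here $a=q^2$, and the coefficient is
\[
\frac{(q^4;q^2)_{2n}}{(q^2;q^2)_{n-r}(q^4;q^2)_{n+r}}=\frac{(q^2;q^2)_{2n+1}}{(q^2;q^2)_{n-r}(q^2;q^2)_{n+r+1}}={2n+1\brack n-r}_{q^2}.
\]
The relation to prove is therefore $P_n(q^2)=\sum_r{2n+1\brack n-r}_{q^2}\alpha^{(2)}_r$, and similarly with $q^{2n}P_n(q^2)$ for $\alpha^{(3)}$. We fold \eqref{eq-finite5} and \eqref{eq-finite7} using ${2n+1\brack n-s}={2n+1\brack n+s+1}$, i.e.\ the index $s\mapsto -s-1$.

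For (2):
\begin{itemize}
\item Terms $q^{15j^2+4j}{2n+1\brack n-3j}$ with $j=m\ge0$ give $r=3m$.
\item With $j=-m$, $m\ge1$, they reflect to ${2n+1\brack n-(3m-1)}$, giving $r=3m-1$ and exponent $15m^2-4m$.
\item Terms $q^{15j^2+14j+3}{2n+1\brack n-3j-1}$ with $j=m\ge0$ give $r=3m+1$.
\item With $j=-m-1$ they reflect to ${2n+1\brack n-3m-1}$, again $r=3m+1$, with sign $-(-1)^m$ and exponent $15m^2+16m+4$. Combining gives $(-1)^mq^{15m^2+14m+3}(1-q^{2m+1})$.
\end{itemize}

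For (3), the same folding applies to $\pm{2n+1\brack n-3j-1}$ with $j\mapsto -j-1$. This gives $r=3m+1$ with $(-1)^{m+1}\bigl(q^{15m^2+10m}-q^{15m^2+20m+5}\bigr)$, and the other reflection gives $r=3m-1$ with $q^{15m^2-10m}$.

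The main obstacle is only careful index bookkeeping. This includes $m=0$ boundary terms, such as ensuring the $r=3m-1$ family starts at $m=1$, and the factor $1/2$ in pair (1). Vanishing of Gaussian polynomials outside $0\le r\le n$ ensures the truncation to $r\le n$ is automatic.
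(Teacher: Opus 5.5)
Your proposal is correct and is essentially the paper's argument: the paper simply states that the three pairs follow immediately from Lemma~\ref{lem-P-2} and \eqref{BP-defn}. Your folding of \eqref{eq-finite3}, \eqref{eq-finite5} and \eqref{eq-finite7}, using ${2n\brack n-s}={2n\brack n+s}$ and ${2n+1\brack n-s}={2n+1\brack n+s+1}$, supplies the omitted bookkeeping and reproduces every case of $\alpha^{(1)},\alpha^{(2)},\alpha^{(3)}$, including $\alpha_0^{(1)}=1$. Two small fixes: delete your hedged first description of the $a=q^2$ coefficient, since it is exactly ${2n+1\brack n-r}_{q^2}$ as you show afterwards, and restore the sign $(-1)^m$ on the $r=3m-1$ term in pair (3).
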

\begin{proof}
This follows immediately from Lemma \ref{lem-P-2} and the definition \eqref{BP-defn}.
\end{proof}

For $\sigma\in\{0,1\}$, we define
\begin{align}
 D_n^{(\sigma)}(q):=(q;q)_n\sum_{r=0}^{\lfloor n/2\rfloor}
 \frac{q^{2r(r+\sigma)}}{(q;q)_{n-2r}(q^2;q^2)_r}=\sum_{r=0}^{\lfloor n/2\rfloor}
 q^{2r(r+\sigma)}(q;q^2)_r {n \brack 2r}_{q}.
 \label{eq-Ddef}
\end{align}

\begin{lemma}\label{lem-baseq}
For every $n\in\mathbb{Z}_{\geq 0}$,
\begin{align}
 D_n^{(0)}(q)&=\sum_{j\in\mathbb{Z}}q^{(15j^2+j)/2}
 \left\{ {2n \brack n-3j}_{q}-q^{5j+1}{2n \brack n-3j-1}_{q}\right\},
 \label{Dn0-id}\\
 D_n^{(1)}(q)
 &=\sum_{j\in\mathbb{Z}}q^{(15j^2+7j)/2} \left\{{2n+1 \brack n-3j}_{q}
       -q^{3j+1} {2n+1 \brack n-3j-1}_{q}\right\}. \label{Dn1-id}
\end{align}
\end{lemma}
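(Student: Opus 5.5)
Both identities are polynomial identities in $q$ for each fixed $n$. I would prove them the same way Lemma \ref{lem-P-1} was proved: identify each with a specialization of one of Warnaar's finite $\mathscr{S}$-sum identities \cite{Warnaar2003}, then pass to the limit $L\to\infty$ using \eqref{eq-S-limit}. If no suitable identity can be located, I would fall back on a recurrence argument in $n$.

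\emph{Limit step.} I would first look in \cite{Warnaar2003} for a finite identity whose left side is a theta-type sum
\[
\sum_{j\in\mathbb{Z}} q^{(15j^2+j)/2}\Big\{\mathscr{S}(L,n,5j,3j;q)-q^{5j+1}\mathscr{S}(L,n,5j+1,3j+1;q)\Big\}
\]
(or the analogous combination with exponents $(15j^2+7j)/2$ and $3j+1$), and whose right side is a single sum $\sum_r q^{2r(r+\sigma)}(\cdots)$ with Gaussian factors in $L$. These are the modulus-$15$ relatives of (5.11) used above. Multiplying by $(q;q)_n$ and letting $L\to\infty$, \eqref{eq-S-limit} turns each $\mathscr{S}$ into ${2n\brack n-b}_q$ with $b=3j$ or $3j+1$. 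On the sum side, ${L+n-2r\brack L}\to 1/(q;q)_{n-2r}$, and a factor such as ${2L-2r\brack 2r}$ or its $q^2$-analogue tends to $1/(q^2;q^2)_r$. This gives exactly $D_n^{(\sigma)}(q)$ as in \eqref{eq-Ddef}. For \eqref{Dn1-id} the Gaussian on the right has top $2n+1$, not $2n$. I would handle this either by a shifted version of \eqref{eq-S-limit}, or by first proving a $2n$-version and then converting it, exactly as \eqref{eq-finite5} was derived from \eqref{eq-W2} using \eqref{qbinomial-1} and the reflection $j\mapsto -j-1$.

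\emph{Fallback: recurrence.} Write $T_n^{(\sigma)}$ for the right sides. First, use \eqref{qbinomial-1}/\eqref{qbinomial-2} on $D_n^{(\sigma)}(q)=\sum_r q^{2r(r+\sigma)}(q;q^2)_r{n\brack 2r}$ to find a short recurrence in $n$ linking $D^{(0)}$ and $D^{(1)}$. Splitting ${n\brack 2r}$ twice gives terms with ${n-2\brack 2r-2}$, which absorb the factor $(1-q^{4r-3})$ and shift $r$. Second, derive the same recurrence for the $T_n^{(\sigma)}$ by applying the Pascal rules to ${2n\brack n-3j}$ and ${2n+1\brack n-3j}$. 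Finally, check the initial values $n=0,1$.

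I expect the main obstacle to be the bookkeeping in the fallback's second step. The Pascal rules produce four or so shifted theta sums, and these must be matched by reindexings $j\mapsto -j$ or $j\mapsto -j-1$ together with \eqref{qbinomial-symmetry}. To do so I would verify exponent identities of the type
\[
\tfrac{15(-j-1)^2+(-j-1)}{2}+\cdots=\tfrac{15j^2+7j}{2}+\cdots,
\]
as in the proof of Lemma \ref{lem-P-2}. As a first sanity check, at $n=1$ the identity \eqref{Dn0-id} reduces to
\[
1=\big({2\brack 1}-q\big)+(\text{the } j=-1 \text{ term, which vanishes}),
\]
which confirms the normalization before the general manipulation.
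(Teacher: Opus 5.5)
Your main route is the paper's route in outline: a finite identity of Warnaar, followed by $L\to\infty$ via \eqref{eq-S-limit}. As written, though, it is a search plan rather than a proof. All of the content of the lemma sits in the finite identity, and you never produce it. The identity the paper actually uses is the $k=2$ case of the last identity in \cite[\S5.2]{Warnaar2003}. It is a single $\mathscr{V}$-identity, not a pair of $\mathscr{S}$-identities:
\[
\sum_{j\in\mathbb{Z}}\Big(q^{(15j^2+j)/2}\,\mathscr{V}(L,M,5j,3j;q)-q^{(3j+1)(5j+2)/2}\,\mathscr{V}(L,M,5j+2,3j+1;q)\Big)
=\sum_{s=0}^{\min(L,M)}q^{s^2/2}{L+M-s \brack L}_{q}{\lfloor L-s/2\rfloor \brack L-s}_{q^2}.
\]
Both parts of the lemma come from this one identity by separating integral from half-integral powers of $q$: even versus odd $s$ on the right, and the factor $q^{b+1/2}$ in $\mathscr{V}$ on the left. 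Two further points differ from what you anticipate:
\begin{itemize}
\item The partner of $\mathscr{S}(L,M,5j,3j;q)$ is $\mathscr{S}(L,M,5j+2,3j+1;q)$, not $\mathscr{S}(L,M,5j+1,3j+1;q)$.
\item The factor that tends to $1/(q^2;q^2)_r$ is ${L-r\brack r}_{q^2}$. A factor ${2L-2r\brack 2r}_{q}$ would tend to $1/(q;q)_{2r}$ instead.
\end{itemize}

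Your treatment of the odd top $2n+1$ also misses what is needed. At parameter $M$, the odd part produces
$(q;q)_M\sum_r q^{2r(r+1)}/\bigl((q;q)_{M-2r-1}(q^2;q^2)_r\bigr)=(1-q^M)D^{(1)}_{M-1}$, not $D^{(1)}_M$. So the limit does not hand you a top-$2n$ formula for $D_n^{(1)}$ that could be converted as in \eqref{eq-finite5}. The paper proceeds as follows:
\begin{itemize}
\item It reindexes $j\mapsto -j-1$ in the second theta sum, obtaining $\sum_j q^{(15j^2+7j)/2}(1-q^{3j+1}){2M\brack M-3j-1}_{q}$.
\item It takes $M=n+1$.
\item It cancels $1-q^{n+1}$ using the termwise identity
\end{itemize}
\[
(1-q^{n+1})\Big\{{2n+1 \brack n-3j}_{q}-q^{3j+1}{2n+1 \brack n-3j-1}_{q}\Big\}=(1-q^{3j+1}){2n+2 \brack n-3j}_{q}.
\]

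Your fallback does not close the gap either. Its first step is sound and short. The rules \eqref{qbinomial-1}--\eqref{qbinomial-2}, together with $(1-q^{2r-1}){n-1\brack 2r-1}_{q}=(1-q^{n-1}){n-2\brack 2r-2}_{q}$, give
\[
D_n^{(1)}=(1-q^n)D_{n-1}^{(1)}+q^nD_n^{(0)}\quad(n\ge 1),\qquad D_n^{(0)}=D_{n-1}^{(0)}+q^n(1-q^{n-1})D_{n-2}^{(1)}\quad(n\ge 2).
\]
Note that the factor absorbed is the last factor $1-q^{2r-1}$ of $(q;q^2)_r$, not $1-q^{4r-3}$. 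The substance of this route, however, is showing that the two theta sums satisfy the same coupled system. That is exactly the part you leave undone and yourself flag as the main obstacle. Until you either supply the finite identity or carry out that verification, the proposal does not prove the lemma.
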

\begin{proof}
We denote two integers
\begin{equation}\label{eq-alphabeta}
 \alpha_j=\frac{15j^2+j}{2},\quad  \beta_j=\frac{(3j+1)(5j+2)}{2}=\alpha_j+5j+1.
\end{equation}
We write the last identity of
\cite[\S5.2, p.~245]{Warnaar2003} with parameter $k=2$ as
\begin{align}
 &\sum_{j\in\mathbb{Z}} \Big(q^{\alpha_j}\mathscr{V}(L,M,5j,3j;q)
       -q^{\beta_j}\mathscr{V}(L,M,5j+2,3j+1;q)\Big)\nonumber\\
 &\quad=\sum_{s=0}^{\min(L,M)}q^{s^2/2}{L+M-s \brack L}_{q}
 {\lfloor L-s/2\rfloor \brack L-s}_{q^2}=:S_0(q)+q^{1/2}S_1(q),
 \label{eq-Warnaar}
\end{align}
where $S_0(q)$ and $S_1(q)$ are power series in $q$. Let $\sigma\in\{0,1\}$. We now evaluate $S_0(q)$ and $S_1(q)$ explicitly. For the left side we use \eqref{eq-Vdef}. On the right of \eqref{eq-Warnaar}, only $s=2r+\sigma$ contributes to $S_\sigma(q)$. Since
\begin{align*}
 \frac{(2r+\sigma)^2-\sigma}{2}=2r(r+\sigma),\quad
 \left\lfloor L-\frac{2r+\sigma}{2}\right\rfloor=L-r-\sigma,
\end{align*}
we deduce that
\begin{align}
 q^{\sigma/2} S_\sigma(q)=&\sum_{j\in\mathbb{Z}}\big\{
 q^{\alpha_j+3\sigma j}\mathscr{S}(L,M,5j+\sigma,3j+\sigma;q)\nonumber\\ 
 &\quad -q^{\beta_j+\sigma(3j+1)}\mathscr{S}(L,M,5j+2+\sigma,3j+1+\sigma;q)\big\}\nonumber\\
 &=\sum_{r=0}^{\lfloor(\min(L,M)-\sigma)/2\rfloor}
 q^{2r(r+\sigma)}{L+M-2r-\sigma \brack L}_{q}{L-r-\sigma \brack r}_{q^2}.
 \label{eq-S-sigma}
\end{align}

For the four parameter pairs in \eqref{eq-S-sigma}, as $\sigma$
ranges over $0,1$, one has $|a-b|\le |b|$. Thus every term with
$|b|\le M$ satisfies the hypothesis of \eqref{eq-S-limit}.
Multiply \eqref{eq-S-sigma} by $(q;q)_M$ and let $L\to\infty$.
For each fixed  $r$,
\begin{align*}
 {L+M-2r-\sigma \brack L}_{q}\longrightarrow  \frac{1}{(q;q)_{M-2r-\sigma}},\quad
 {L-r-\sigma \brack r}_{q^2}\longrightarrow  \frac{1}{(q^2;q^2)_r}.
\end{align*}
Consequently, for $M\geq \sigma$,
\begin{align}
 &\sum_{j\in\mathbb{Z}}\Big\{q^{\alpha_j+3\sigma j} {2M \brack M-3j-\sigma}_{q}
 -q^{\beta_j+\sigma(3j+1)} {2M \brack M-3j-1-\sigma}_{q}\Big\}\nonumber\\
 &\quad=(q;q)_M\sum_{r=0}^{\lfloor(M-\sigma)/2\rfloor}\frac{q^{2r(r+\sigma)}}{(q;q)_{M-2r-\sigma}(q^2;q^2)_r}.
 \label{eq-paritylimit}
\end{align}

For $\sigma=0$, setting $M=n$ in \eqref{eq-paritylimit} and using $\beta_j=\alpha_j+5j+1$, we prove \eqref{Dn0-id}.

For $\sigma=1$, we denote
\begin{align*}
 \gamma_j=\alpha_j+3j=\frac{15j^2+7j}{2},\quad \delta_j=\beta_j+3j+1=\frac{15j^2+17j+4}{2}.
\end{align*}
In the second bilateral term of \eqref{eq-paritylimit}, replace $j$ by $-j-1$. Then
\begin{align*}
 \delta_{-j-1}=\gamma_j+3j+1,\quad {2M \brack M+3j+1}_{q}={2M \brack M-3j-1}_{q}.
\end{align*}
Hence the odd component becomes
\begin{align} \label{eq-odd-rewrite}
\sum_{j\in\mathbb{Z}}q^{\gamma_j}(1-q^{3j+1}){2M \brack M-3j-1}_{q}=(q;q)_M\sum_{r=0}^{\lfloor(M-1)/2\rfloor}\frac{q^{2r(r+1)}}{(q;q)_{M-2r-1}(q^2;q^2)_r}.
\end{align}

The change to an odd upper index uses the elementary relation
\begin{align} \label{eq-add-id}
(1-q^{n+1}) \left\{{2n+1 \brack n-3j}_{q}-q^{3j+1} {2n+1 \brack n-3j-1}_{q}\right\}=(1-q^{3j+1}){2n+2 \brack n-3j}_{q}.
\end{align}
To verify it, write $d=n-3j$ and use
\begin{align*}   
{2n+1 \brack d}_{q}=\frac{1-q^{2n+2-d}}{1-q^{2n+2}}{2n+2 \brack d}_{q},\quad
 {2n+1 \brack d-1}_{q}=\frac{1-q^d}{1-q^{2n+2}} {2n+2 \brack d}_{q}.
\end{align*}
The numerator of the required difference factors as
\begin{align*}
 (1-q^{2n+2-d})-q^{n+1-d}(1-q^d)=(1+q^{n+1})(1-q^{n+1-d}).
\end{align*}
Canceling $1-q^{2n+2}=(1-q^{n+1})(1+q^{n+1})$ proves \eqref{eq-add-id}.

Finally, take $M=n+1$ in \eqref{eq-odd-rewrite} and apply \eqref{eq-add-id}. Both sides contain $1-q^{n+1}$, and canceling it gives \eqref{Dn1-id}. This also covers $n=0$ by using $M=1$. This proves Lemma~\ref{lem-baseq}.
\end{proof}

\begin{corollary}\label{cor-BP45}
Let $\alpha_0^{(i)}=1$ ($i=4,5$) and for $n\geq 1$ we define
\begin{equation}\label{BP4}
\alpha_n^{(4)}(1;q)=\begin{cases}
    q^{(15m^2-m)/2}(1+q^m) & n=3m, \\
    -q^{\frac{1}{2}(15m^2+11m)+1} & n=3m+1, \\
    -q^{\frac{1}{2}(15m^2+19m)+3} & n=3m+2,
\end{cases} \quad    \beta_n^{(4)}(1;q)=\frac{D_n^{(0)}(q)}{(q;q)_{2n}};
\end{equation}
\begin{equation}\label{BP5}
\alpha_n^{(5)}(1;q)=\begin{cases}
 q^{(15m^2-5m)/2}(1+q^{5m}) & n=3m,\\
 -q^{\frac{1}{2}(15m^2+5m)} & n=3m+1, \\
 -q^{\frac{1}{2}(15m^2+25m)+5} & n=3m+2,
\end{cases} \quad \beta_n^{(5)}(1;q)=\frac{q^nD_n^{(0)}(q)}{(q;q)_{2n}}. 
\end{equation}
Then $(\alpha_n^{(i)},\beta_n^{(i)})$ ($i=4,5)$ are Bailey pairs relative to $1$.
\end{corollary}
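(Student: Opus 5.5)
Both claims reduce to finite identities for Gaussian polynomials. Multiplying the defining relation \eqref{BP-defn} with $a=1$ by $(q;q)_{2n}$ shows that $(\alpha_n,\beta_n)$ is a Bailey pair relative to $1$ if and only if
\begin{align*}
(q;q)_{2n}\beta_n=\sum_{r=0}^{n}\alpha_r {2n \brack n-r}_{q}
\end{align*}
for every $n\ge 0$. So for $i=4$ I must show that $D_n^{(0)}(q)$ equals $\sum_r\alpha_r^{(4)}{2n\brack n-r}_q$, and for $i=5$ that $q^nD_n^{(0)}(q)$ equals $\sum_r\alpha_r^{(5)}{2n\brack n-r}_q$.

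\textbf{The pair $i=4$.} This follows from \eqref{Dn0-id} by folding the bilateral sum into a unilateral one with \eqref{qbinomial-symmetry}, that is, ${2n\brack n+k}_q={2n\brack n-k}_q$.
\begin{itemize}
\item In the first sum, the terms $j=m$ and $j=-m$ with $m\ge1$ both give ${2n\brack n-3m}_q$, with total coefficient $q^{(15m^2+m)/2}+q^{(15m^2-m)/2}=q^{(15m^2-m)/2}(1+q^m)$. The term $j=0$ gives $1$. This matches $\alpha^{(4)}_{3m}$ and $\alpha^{(4)}_0=1$.
\item In the second sum, $j=m\ge0$ gives ${2n\brack n-3m-1}_q$ with coefficient $-q^{(15m^2+11m)/2+1}$, which is $\alpha^{(4)}_{3m+1}$.
\item In the second sum, $j=-m-1$ gives ${2n\brack n+3m+2}_q={2n\brack n-3m-2}_q$. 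A short computation gives the exponent $\tfrac{15(m+1)^2-11(m+1)}{2}+1=\tfrac{15m^2+19m}{2}+3$, so this term is $\alpha^{(4)}_{3m+2}$.
\end{itemize}
Terms with $|3j|>n$ vanish, so all sums are finite and the identification is exact.

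\textbf{The pair $i=5$.} The same folding, run backwards, turns the target into the bilateral identity
\begin{align*}
q^nD_n^{(0)}(q)=\sum_{j\in\mathbb{Z}}\Big\{q^{(15j^2-5j)/2}{2n\brack n-3j}_q-q^{(15j^2+5j)/2}{2n\brack n-3j-1}_q\Big\}.
\end{align*}
One checks that $j\mapsto -j-1$ in the second sum reproduces $\alpha^{(5)}_{3m+2}$. My plan is to derive this identity from Lemma~\ref{lem-baseq}, in the manner of the proof of Lemma~\ref{lem-P-2}.

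\begin{enumerate}
\item Find an explicit relation expressing $(1-q^n)D_n^{(0)}(q)$ through $D_{n-1}^{(1)}(q)$. To do this, split $1-q^n=(1-q^{n-2r})+q^{n-2r}(1-q^{2r})$ termwise in \eqref{eq-Ddef}. The piece with $(1-q^{2r})$ cancels one factor of $(q^2;q^2)_r$. After the shift $r\mapsto r+1$ it should reproduce a term of the shape of $D_{n-1}^{(1)}$, whose weight is $q^{2r(r+1)}$. This step is an analogue of \eqref{eq-P-relation}.
\item Insert \eqref{Dn0-id} and \eqref{Dn1-id} (the latter with $n$ replaced by $n-1$) into this relation.
\item Convert all Gaussian polynomials to upper index $2n$ using \eqref{qbinomial-1}, \eqref{qbinomial-2} and \eqref{eq-add-id}.
\item Cancel the resulting extra sums in pairs via $j\mapsto -j-1$, exactly as in the proofs of \eqref{eq-finite5} and \eqref{eq-finite7}.
\end{enumerate}

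If that route is awkward, a fallback is the standard method for finite identities. Show that both sides satisfy the same recurrence in $n$: the left side's recurrence comes from $q$-Pascal applied to ${n\brack 2r}_q$, and the right side's from \eqref{qbinomial-1}--\eqref{qbinomial-2}. Then compare the initial values $n=0,1$.

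The main obstacle is the $i=5$ identity, because Lemma~\ref{lem-baseq} gives no formula for $q^nD_n^{(0)}$ directly. The key difficulty is finding the correct linear relation among $D_n^{(0)}$, $q^nD_n^{(0)}$ and $D_{n-1}^{(1)}$ in step 1, and then tracking the exponents $\tfrac{15j^2\pm5j}{2}$ through the reindexing in steps 3 and 4.
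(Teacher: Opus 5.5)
Your treatment of $i=4$ is correct and coincides with the paper's argument. Your reduction of $i=5$ to the bilateral identity
\[
q^nD_n^{(0)}(q)=\sum_{j\in\mathbb{Z}}\Big\{q^{(15j^2-5j)/2}{2n \brack n-3j}_{q}-q^{(15j^2+5j)/2}{2n \brack n-3j-1}_{q}\Big\}
\]
is also correct. The gap is that this identity, which is the entire content of the case $i=5$, is never proved, and the route you sketch does not work as described. Carrying out your splitting $1-q^n=(1-q^{n-2r})+q^{n-2r}(1-q^{2r})$ in \eqref{eq-Ddef} gives
\[
(1-q^n)D_n^{(0)}(q)=(1-q^n)D_{n-1}^{(0)}(q)+q^n(1-q^n)(1-q^{n-1})D_{n-2}^{(1)}(q).
\]
So the second piece produces $D_{n-2}^{(1)}$, not $D_{n-1}^{(1)}$, and there is an extra $D_{n-1}^{(0)}$ piece. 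After cancelling $1-q^n$ this is just the recurrence $D_n^{(0)}=D_{n-1}^{(0)}+q^n(1-q^{n-1})D_{n-2}^{(1)}$, in which $q^nD_n^{(0)}$ does not occur. Unlike \eqref{eq-P-relation}, it gives no handle on the twisted sequence. Inserting \eqref{Dn0-id} and \eqref{Dn1-id} would also leave Gaussian polynomials with upper indices $2n-2$ and $2n-3$, which \eqref{qbinomial-1}, \eqref{qbinomial-2} and \eqref{eq-add-id} do not raise to $2n$ termwise. Your fallback (a common recurrence plus initial values) names no recurrence, so it is not an argument either.

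What is missing is that no $D^{(1)}$ is needed: the factor $q^n$ can be absorbed term by term in \eqref{Dn0-id}. First replace $j$ by $-j$ in its second sum, so that
\[
D_n^{(0)}(q)=\sum_{j\in\mathbb{Z}}q^{(15j^2+j)/2}\Big\{{2n \brack n-3j}_{q}-q^{1-6j}{2n \brack n-3j+1}_{q}\Big\}.
\]
Then use the elementary relation $(1-q^{n+s}){2n \brack n-s}_{q}=(1-q^{n-s+1}){2n \brack n-s+1}_{q}$ with $s=3j$. Equivalently,
\[
q^n\Big(q^{s}{2n \brack n-s}_{q}-q^{1-s}{2n \brack n-s+1}_{q}\Big)={2n \brack n-s}_{q}-{2n \brack n-s+1}_{q}.
\]
The $j$-th summand equals $q^{(15j^2-5j)/2}\big(q^{3j}{2n \brack n-3j}_{q}-q^{1-3j}{2n \brack n-3j+1}_{q}\big)$. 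Hence $q^nD_n^{(0)}(q)=\sum_{j}q^{(15j^2-5j)/2}\big({2n \brack n-3j}_{q}-{2n \brack n-3j+1}_{q}\big)$, which becomes your target after $j\mapsto -j$ in the second sum. The reindexing matters: if you pair ${2n \brack n-3j}_{q}$ with ${2n \brack n-3j-1}_{q}$ as in the original form of \eqref{Dn0-id}, the analogous termwise relation would need the factor $q^{6j+1}$ in place of $q^{5j+1}$.
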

\begin{proof}
(1) Replacing $k$ by $-k$ or $-k-1$, we can rewrite \eqref{Dn0-id}  as
\begin{align}
D_n^{(0)}&={2n\brack n}+\sum_{j\geq 1}
 {2n \brack n-3j}_{q}\Big(q^{(15j^2+j)/2}+q^{(15j^2-j)/2}\Big)
      \nonumber \\
    & \quad -\sum_{j\geq 0} q^{(15j^2+j)/2+5j+1}{2n \brack n-3j-1}_{q}-\sum_{j\geq 0} q^{(15j^2+19j)/2+3} {2n \brack n-3j-2}.
\end{align}
After dividing both sides by $(q;q)_{2n}$, we obtain the Bailey pair $(\alpha_n^{(4)},\beta_n^{(4)})$.

(2) Replacing $j$ by $-j$ in the second Gaussian polynomial in \eqref{Dn0-id}, we may rewrite it as 
\begin{align}
    D_n^{(0)}(q)=\sum_{j\in\mathbb{Z}}q^{(15j^2+j)/2}\left\{ {2n \brack n-3j}_{q}
       -q^{-6j+1}{2n \brack n-3j+1}_{q}\right\}.
 \label{Dn0-id-rewrite}
\end{align}
It is easy to see that
\begin{align}
(1-q^{n+s}){2n \brack n-s}=(1-q^{n-s+1}){2n \brack n-s+1}.
\end{align}
Equivalently, we have
\begin{align}\label{binomial-key}
    q^n\Big( q^s{2n \brack n-s}-q^{1-s}{2n \brack n-s+1}\Big)={2n \brack n-s}-{2n \brack n-s+1}.
\end{align}
Applying \eqref{binomial-key} with $s=3j$ to \eqref{Dn0-id-rewrite}, we obtain
\begin{align}
q^nD_n^{(0)}(q)=\sum_{j\in \mathbb{Z}} q^{(15j^2-5j)/2}\Big({2n\brack n-3j}-{2n \brack n-3j+1}\Big).
\end{align}
After replacing $j$ by $-j$ or $j+1$, we obtain the Bailey pair $(\alpha_n^{(5)}(1;q),\beta_n^{(5)}(1;q))$.
\end{proof}

\begin{corollary}\label{cor-BP67}
Let 
\begin{equation}
 A_m(q):=\frac{q^{(15m^2+13m)/2}}{1+q^{3m+1}}, \quad B_m(q):=\frac{q^{(15m^2+23m+6)/2}}{1+q^{3m+2}}.
 \label{eq-S}
\end{equation}
We have the following Bailey pairs relative to $q$:
\begin{align}\label{BP6}
&\alpha_n^{(6)}(q;q)=\begin{cases}
   q^{(15m^2+7m)/2}, & n=3m \\
   q^{(15m^2-7m)/2}, & n=3m-1, \\
   -q^{(15m^2+13m+2)/2}(1+q^{2m+1}), & n=3m+1,
\end{cases} \quad \beta_n^{(6)}(q;q)=\frac{D_n^{(1)}(q)}{(q^2;q)_{2n}}; \\
&\alpha_n^{(7)}(q;q)=\begin{cases}
    A_m(q), & n=3m, \\
    -A_m(q)-B_m(q), & n=3m+1, \\
    B_m(q), & n=3m+2,
    \end{cases} \quad \beta_n^{(7)}(q;q)=\frac{q^n}{1+q^{n+1}}\frac{D_n^{(1)}(q)}{(q^2;q)_{2n}}. \label{BP7}
\end{align}
\end{corollary}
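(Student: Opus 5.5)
The proof is a direct verification from the definition \eqref{BP-defn} with $a=q$. Since $(q^2;q)_{n+r}=(q;q)_{n+r+1}/(1-q)$ and $(q^2;q)_{2n}=(q;q)_{2n+1}/(1-q)$, we have
\begin{align*}
\frac{(q^2;q)_{2n}}{(q;q)_{n-r}(q^2;q)_{n+r}}={2n+1 \brack n-r}_{q}.
\end{align*}
So a pair $(\alpha_n,\beta_n)$ with $\beta_n=X_n/(q^2;q)_{2n}$ is a Bailey pair relative to $q$ if and only if $X_n=\sum_{r=0}^n\alpha_r{2n+1 \brack n-r}_q$. Both claims therefore reduce to finite Gaussian polynomial identities. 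These will be extracted from \eqref{Dn1-id} in Lemma~\ref{lem-baseq}, exactly as in the proof of Corollary~\ref{cor-BP45}.

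For \eqref{BP6} the task is simply to rewrite the bilateral sum \eqref{Dn1-id} unilaterally.
\begin{itemize}
\item In the first term $q^{(15j^2+7j)/2}{2n+1 \brack n-3j}_q$:
  \begin{itemize}
  \item $j=m\ge 0$ gives $r=3m$ with coefficient $q^{(15m^2+7m)/2}$.
  \item $j=-m$ with $m\ge1$ gives ${2n+1 \brack n+3m}_q={2n+1 \brack n-(3m-1)}_q$ by \eqref{qbinomial-symmetry}. This is $r=3m-1$ with coefficient $q^{(15m^2-7m)/2}$.
  \end{itemize}
\item In the second term $-q^{(15j^2+13j+2)/2}{2n+1 \brack n-3j-1}_q$:
  \begin{itemize}
  \item $j=m\ge0$ gives $r=3m+1$ with coefficient $-q^{(15m^2+13m+2)/2}$.
  \item $j=-m-1$, after symmetry, also lands on $r=3m+1$. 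Its exponent is $(15m^2+17m+4)/2=(15m^2+13m+2)/2+2m+1$.
  \end{itemize}
\end{itemize}
Adding the two contributions at $r=3m+1$ gives $-q^{(15m^2+13m+2)/2}(1+q^{2m+1})$, which is exactly $\alpha^{(6)}_{3m+1}$. One also checks that $r=0$ receives only the $j=0$ term, so $\alpha^{(6)}_0=1$.

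For \eqref{BP7} we must show
\begin{align*}
q^nD_n^{(1)}(q)=(1+q^{n+1})\sum_{m}\Big(A_m\Big({2n+1 \brack n-3m}_q-{2n+1 \brack n-3m-1}_q\Big)+B_m\Big({2n+1 \brack n-3m-2}_q-{2n+1 \brack n-3m-1}_q\Big)\Big).
\end{align*}
Here the grouping of the $\alpha^{(7)}$ coefficients comes straight from its definition. The plan is to move every term to upper index $2n+2$.
\begin{itemize}
\item With $d=n-s$, the computation in the proof of \eqref{eq-add-id} gives
\begin{align*}
{2n+1 \brack d}_q-{2n+1 \brack d-1}_q=\frac{q^{d}(1-q^{2s+2})}{1-q^{2n+2}}{2n+2 \brack d}_q .
\end{align*}
Multiplying by $1+q^{n+1}$ turns this into $q^d(1-q^{2s+2})(1-q^{n+1})^{-1}{2n+2 \brack d}_q$.
\item For the left side, apply \eqref{eq-add-id} termwise to \eqref{Dn1-id}. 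This gives $(1-q^{n+1})D_n^{(1)}=\sum_j q^{(15j^2+7j)/2}(1-q^{3j+1}){2n+2 \brack n-3j}_q$.
\item Both sides now carry the common factor $(1-q^{n+1})^{-1}$, and the claim becomes a coefficientwise comparison of ${2n+2 \brack n-3j}_q$. The denominator $1+q^{3m+1}$ in $A_m$ cancels against $1-q^{6m+2}=(1-q^{3m+1})(1+q^{3m+1})$, which arises from the case $s=3m$. The $B_m$ terms, with $s=3m+1$ after symmetry and $j\mapsto -j-1$, cancel $1+q^{3m+2}$ in the same way.
\end{itemize}
The factor $q^n$ on the left should be absorbed by $q^{d}=q^{n-s}$ together with the exponent shifts between $(15m^2+7m)/2$ and the exponents of $A_m$ and $B_m$.

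The main obstacle is this last bookkeeping. One has to pair each $A_m$ and $B_m$ contribution with the correct $j$ (either $j=m$ or $j=-m-1$) under the symmetry ${2n+2 \brack d}_q={2n+2 \brack 2n+2-d}_q$, and confirm that the exponents match exactly. I would first test small $n$ (for example $n=0,1,2$) numerically to fix the pairing before writing the general computation.
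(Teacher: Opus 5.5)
Your proposal is correct and is essentially the paper's argument. The \eqref{BP6} part is identical. For \eqref{BP7}, your two conversions to ${2n+2\brack n-r}_q$ are exactly the two evaluations the paper uses to prove \eqref{Kn-diff}; the only difference is that the paper applies that identity to the bilateral sum \eqref{Dn1-id} before splitting into $j=m$ and $j=-m-1$, so the $B_m$ terms come out automatically.

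The bookkeeping you flag does close. The identity $q^{-3m}(1-q^{6m+2})A_m=q^{(15m^2+7m)/2}(1-q^{3m+1})$ gives the $j=m$ term. Likewise $q^{-3m-1}(1-q^{6m+4})B_m=q^{(15m^2+17m+4)/2}(1-q^{3m+2})$ is minus the $j=-m-1$ term, using ${2n+2\brack n+3m+3}_q={2n+2\brack n-3m-1}_q$. This matches the minus sign coming from ${2n+1\brack n-3m-2}_q-{2n+1\brack n-3m-1}_q$.
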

\begin{proof}
(1) The first Bailey pair follows directly from \eqref{Dn1-id} after replacing $j$ by $-j$ or $-j-1$.

(2) We denote
\begin{align}\label{Kn-defn}
    K_n(r)=\frac{1}{(q;q)_{2n+1}}{2n+1 \brack n-r}=\frac{1}{(q;q)_{n-r}(q;q)_{n+r+1}}
\end{align}
for $-n-1\leq r\leq n$ and $K_n(r)=0$ otherwise. It is obvious that
\begin{align}\label{Kn-symmetry}
    K_n(r)=K_n(-r-1).
\end{align}
It is straightforward to calculate that
\begin{align*}
   K_n(r)-q^{r+1}K_n(r+1)=\frac{(1-q^{r+1})(1+q^{n+1})}{(q;q)_{n-r}(q;q)_{n+r+2}}, \\
   K_n(r)-K_n(r+1)=\frac{q^{n-r}(1-q^{r+1})(1+q^{r+1})}{(q;q)_{n-r}(q;q)_{n+r+2}}.
\end{align*}
Hence we have
\begin{align}\label{Kn-diff}
    \frac{q^n}{1+q^{n+1}}\big(K_n(r)-q^{r+1}K_n(r+1)\big)=\frac{q^r}{1+q^{r+1}}\big(K_n(r)-K_n(r+1)\big).
\end{align}
Applying \eqref{Kn-diff} with $r=3j$ to \eqref{Dn1-id}, we deduce that
\begin{align}\label{beta6-start}
    \beta_n^{(6)}(q;q)&=\frac{q^n}{1+q^{n+1}}\sum_{j\in \mathbb{Z}}q^{(15j^2+7j)/2}\big(K_n(3j)-q^{3j+1}K_n(3j+1)\big) \nonumber \\
    &=\sum_{j\in \mathbb{Z}} \frac{q^{(15j^2+13j)/2}}{1+q^{3j+1}}\big(K_n(3j)-K_n(3j+1)\big).
\end{align}
Now split the sum into $j=m$ and $j=-m-1$ with $m\geq 0$. Using \eqref{Kn-symmetry} we can rewrite \eqref{beta6-start} as
\begin{align}
    \beta_n^{(6)}(q;q)=\sum_{m\geq 0} \big( A_m K_n(3m)-(A_m+B_m)K_n(3m+1)+B_mK_n(3m+2)\big).
\end{align}
From it we conclude that $(\alpha_n^{(7)},\beta_n^{(7)})$ is a Bailey pair relative to $q$. 
\end{proof}

\section{Proofs of the triple and quadruple sums identities}\label{sec-thm13}
We are now ready to give proofs for Theorems \ref{thm-1}--\ref{thm-3}.
\begin{proof}[Proof of Theorem \ref{thm-1}]
Summing over $k$ using \eqref{Euler}, we deduce that
\begin{align}
    F_1(a,b,c)=\sum_{i,j\geq 0} \frac{q^{(i+j)^2+i^2+(a-2)i+(b-1)j}}{(q;q)_i(q^2;q^2)_j}(q^{4i+4j+c};q^4)_\infty.
\end{align}
After writing $i+j=n$, we have 
\begin{align}
F_1(2,2,2)&=(q^2;q^4)_\infty \sum_{n=0}^\infty \frac{q^{n^2+n}}{(q^2;q^4)_n}\sum_{i=0}^n \frac{q^{i^2-i}}{(q;q)_i(q^2;q^2)_{n-i}}, \label{F1-id-1-proof-start} \\
  F_1(4,2,6)&=(q^2;q^4)_\infty \sum_{n=0}^\infty \frac{q^{n^2+n}}{(q^2;q^4)_{n+1}}\sum_{i=0}^n \frac{q^{i^2+i}}{(q;q)_i(q^2;q^2)_{n-i}}, \label{F1-id-2-proof-start} \\
    F_1(6,4,6)&=(q^2;q^4)_\infty \sum_{n=0}^\infty \frac{q^{n^2+3n}}{(q^2;q^4)_{n+1}}\sum_{i=0}^n \frac{q^{i^2+i}}{(q;q)_i(q^2;q^2)_{n-i}}. \label{F1-id-3-proof-start}
\end{align}

Recall the function $P_n(x)$ defined in \eqref{eq-Pdef} and the Bailey pairs $(\alpha_n^{(i)},\beta_n^{(i)})$ in Corollary \ref{cor-BP123}. We have
\begin{align}
F_1(2,2,2)&=(q^2;q^4)_\infty \sum_{n=0}^\infty \frac{q^{n^2+n}}{(q^2;q^4)_n} \frac{P_n(1)}{(q^2;q^2)_n}  \nonumber \\
&=(q^2;q^4)_\infty \sum_{n=0}^\infty q^{n^2+n}(-q^2;q^2)_n \frac{P_n(1)}{(q^2;q^2)_{2n}} \nonumber \\
&=(q^2;q^4)_\infty \sum_{n=0}^\infty q^{n^2+n}(-1;q^2)_n \beta_n^{(1)}(1;q^2) \nonumber \\
&= \frac{2}{(q^2;q^2)_\infty} \sum_{n\geq 0}\frac{q^{n(n+1)}\alpha_n^{(1)}(1;q^2)}{1+q^{2n}} \quad \text{(by \eqref{S6-a1-limit})} \nonumber \\
 &=\frac{2}{(q^2;q^2)_\infty} \sum_{k\in\mathbb{Z}}(-1)^k
 \left(q^{24k^2+k}+q^{24k^2-17k+3}\right) \nonumber \\
 &=\frac{2}{(q^2;q^2)_\infty}  \mathcal Q(q^3;q^{16}).
\end{align}
Here for the last equality we used \eqref{eq-quintuple}. This proves \eqref{thm1-id-1}.

Similarly, we have
\begin{align}
F_1(4,2,6)&=(q^2;q^4)_\infty \sum_{n=0}^\infty \frac{q^{n^2+n}}{(q^2;q^4)_{n+1}} \frac{P_n(q^2)}{(q^2;q^2)_n} \nonumber \\
&=\frac{(q^2;q^4)_\infty}{1-q^2} \sum_{n=0}^\infty q^{n^2+n}(-q^2;q^2)_n \beta_n^{(2)}(q^2;q^2) \nonumber \\
&=\frac{1}{(q^2;q^2)_\infty} \sum_{n=0}^\infty q^{n^2+n}\alpha_n^{(2)}(q^2;q^2) \quad  \text{(by \eqref{BP-S2-aq-limit})} \nonumber \\
&=\frac{1}{(q^2;q^2)_\infty}\sum_{k\in\mathbb{Z}}(-1)^k
 \left(q^{24k^2+7k}+q^{24k^2-23k+5}\right) \nonumber \\
&=\frac{1}{(q^2;q^2)_\infty} \mathcal Q(q^5;q^{16}).
\end{align}
Here for the last equality we used \eqref{eq-quintuple}.  This proves \eqref{thm1-id-2}. 

Finally, we have
\begin{align}
F_1(6,4,6)&=(q^2;q^4)_\infty \sum_{n=0}^\infty \frac{q^{n^2+3n}}{(q^2;q^4)_{n+1}} \frac{P_n(q^2)}{(q^2;q^2)_n} \nonumber \\
&=\frac{(q^2;q^4)_\infty}{1-q^2} \sum_{n=0}^\infty q^{n^2+n}(-q^2;q^2)_n \beta_n^{(3)}(q^2;q^2) \nonumber \\
&=\frac{1}{(q^2;q^2)_\infty} \sum_{n=0}^\infty q^{n^2+n}\alpha_n^{(3)}(q^2;q^2)  \quad \text{(by \eqref{BP-S2-aq-limit})}\nonumber \\
&=\frac{1}{(q^2;q^2)_\infty}\sum_{k\in\mathbb{Z}}(-1)^k
 \left(q^{24k^2+13k}+q^{24k^2-29k+7}\right) \nonumber \\
&=\frac{1}{(q^2;q^2)_\infty}\mathcal Q(q^7;q^{16}).
\end{align}
Here for the last equality we used \eqref{eq-quintuple}. This proves \eqref{thm1-id-3}.
\end{proof}

\begin{proof}[Proof of Theorem~\ref{thm-2}]
Set $n=j+2k$. Using \eqref{Euler} to sum over $i$, we deduce that
\begin{align}\label{F2-start}
 F_2(a,b,c)&=\sum_{j,k\geq 0} (-1)^j\frac{q^{j^2+8k^2+4jk+(b-1)j+(c-8)k}}{(q^2;q^2)_j(q^4;q^4)_k}(-q^{2j+4k+a};q)_\infty  \nonumber \\
 &=(-q^a;q)_\infty\sum_{n\geq 0} \frac{(-1)^nq^{n^2+(b-1)n}}{(-q^a;q)_{2n}}
 \sum_{k=0}^{\lfloor n/2\rfloor} \frac{q^{4k^2+(c-2b-6)k}}{(q^2;q^2)_{n-2k}(q^4;q^4)_k}.
\end{align}
Note that the value of $c-2b-6$ is 0 for $(a,b,c)=(1,3,12),(1,1,8)$ and $4$ for $(a,b,c)=(3,3,16)$. Hence the inner sum are connected with $D_n^{(0)}(q^2)$ and $D_n^{(1)}(q^2)$ for these three cases. 

(1) When $(a,b,c)=(1,3,12)$, from \eqref{F2-start} we have 
\begin{align}
    F_2(1,3,12)&=(-q;q)_\infty \sum_{n\geq 0} (-1)^n \frac{q^{n^2+2n}}{(-q;q)_{2n}}  \frac{D_n^{(0)}(q^2)}{(q^2;q^2)_n} \nonumber \\
    &=(-q;q)_\infty\sum_{n\geq 0} (-1)^n q^{n^2}(q;q^2)_n \beta_n^{(5)}(1;q^2).
\end{align}
Substituting the Bailey pair $(\alpha_n^{(5)},\beta_n^{(5)})$ in \eqref{BP5} into \eqref{BP-S2-a1-limit-negative}, we deduce that
\begin{align}
&(q^2;q^2)_\infty F_2(1,3,12)=\sum_{r\geq 0}(-1)^rq^{r^2}\alpha_r^{(5)}(1;q^2) \nonumber \\
&=1+\sum_{m\geq 1}(-1)^m(q^{24m^2-5m}+q^{24m^2+5m})+\sum_{m\geq 0}(-1)^m
 (q^{24m^2+11m+1}-q^{24m^2+37m+14}) \nonumber \\
&=\sum_{k\in\mathbb Z}(-1)^k (q^{24k^2-5k}+q^{24k^2-11k+1})=\mathcal{Q}(q;q^{16}).
\end{align}
This proves \eqref{thm2-id-1}.

(2) When $(a,b,c)=(1,1,8)$, from \eqref{F2-start} we have 
\begin{align}
    F_2(1,1,8)=(-q;q)_\infty \sum_{n\geq 0} (-1)^nq^{n^2}(q;q^2)_n \beta_n^{(4)}(1;q^2).
\end{align}
Substituting the Bailey pair $(\alpha_n^{(4)},\beta_n^{(4)})$ in \eqref{BP4} into \eqref{BP-S2-a1-limit-negative}, we obtain 
\begin{align}
& (q^2;q^2)_\infty F_2(1,1,8)=\sum_{r\geq 0}(-1)^rq^{r^2}\alpha_r^{(4)}(1;q^2) \nonumber \\
&=1+\sum_{m\geq 1}(-1)^m(q^{24m^2-m}+q^{24m^2+m})+\sum_{m\geq 0}(-1)^m
 (q^{24m^2+17m+3}-q^{24m^2+31m+10}) \nonumber \\
 &=\sum_{k\in\mathbb Z}(-1)^k(q^{24k^2+k}+q^{24k^2-17k+3})=\mathcal{Q}(q^3;q^{16}).
\end{align}
This proves \eqref{thm2-id-2}.

(3) When $(a,b,c)=(3,3,16)$, from \eqref{F2-start} we have 
\begin{align}\label{F2-3316-mid}
& F_2(3,3,16)=(-q;q)_\infty\sum_{n\geq 0}
 \frac{(-1)^nq^{n^2}}{(-q;q)_{2n+2}}
 \frac{q^{2n} D_n^{(1)}(q^2)}{(q^2;q^2)_n} \nonumber \\
&=(-q;q)_\infty \frac{1-q}{1-q^2} \sum_{n\geq 0} (-1)^n q^{n^2}(q^3;q^2)_n \beta_n^{(7)}(q^2;q^2) \nonumber \\
&=(-q;q)_\infty \frac{1}{1+q} \frac{(q;q^2)_\infty}{(q^4;q^2)_\infty} \sum_{n=0}^\infty \frac{1-q^{2n+1}}{1-q}(-1)^nq^{n^2}\alpha_n^{(7)}(q^2;q^2).
\end{align}
Here the Bailey pair $(\alpha_n^{(7)},\beta_n^{(7)})$ is given in \eqref{BP7} and we substitute it into \eqref{S4-aq-limit-negative} to get the last equality.
Hence we have
\begin{align}
&(q^2;q^2)_\infty F_2(3,3,16)=\sum_{n=0}^\infty (1-q^{2n+1})(-1)^nq^{n^2}\alpha_n^{(7)}(q^2;q^2) \nonumber \\
&=\sum_{m=0}^\infty (1-q^{6m+1})(-1)^mq^{9m^2}A_m(q^2)   \nonumber \\
&\quad +\sum_{m=0}^\infty (1-q^{6m+3})(-1)^{m}q^{(3m+1)^2}(A_m(q^2)+B_m(q^2)) \nonumber \\
&\quad +\sum_{m=0}^\infty (1-q^{6m+5})(-1)^mq^{(3m+2)^2}B_m(q^2) \nonumber \\
&=\sum_{m=0}^\infty (-1)^mq^{9m^2}(1-q^{12m+4})A_m(q^2)+\sum_{m=0}^\infty (-1)^mq^{(3m+1)^2}(1-q^{12m+8})B_m(q^2) \nonumber \\
&=\sum_{m=0}^\infty (-1)^mq^{24m^2+13m}(1-q^{6m+2})+\sum_{m=0}^\infty (-1)^mq^{24m^2+29m+7}(1-q^{6m+4}).
\end{align}
Replacing $m$ by $-m-1$ in the second sum, we deduce that
\begin{align}\label{F2-3316-Q}
&(q^2;q^2)_\infty F_2(3,3,16)=\sum_{m=-\infty}^\infty (-1)^m q^{24m^2+13m}(1-q^{6m+2}) \nonumber \\
&=\sum_{m=-\infty}^\infty (-1)^m q^{24m^2+13m}-\sum_{m=-\infty}^\infty (-1)^m q^{24m^2+19m+2} \nonumber \\
&=\sum_{m=-\infty}^\infty (-1)^m q^{24m^2+13m}(1+q^{16m+7})=\mathcal{Q}(q^7;q^{16}).
\end{align}
Here we replaced $m$ by $-m-1$ for the second sum to obtain the third equality, and we used  \eqref{eq-quintuple} to obtain the last equality. This proves \eqref{thm2-id-3}.
\end{proof}


\begin{proof}[Proof of Theorem~\ref{thm-3}]
We denote $m=i+2j$ and $n=k+2\ell$. Then the exponent in the  sum $F_3(1,5,1,12)$ can be written as
\begin{align}
 &\binom{m}{2}+m+\binom{i}{2}+2j
 +k^2+2mk+4m\ell+4k\ell+8\ell^2+4\ell \nonumber \\
 &=\binom{m}{2}+m+\binom{i}{2}+2j+2mn+n^2+4\ell^2+4\ell.
\end{align}
Recall the following identity (see \cite[(3.21)]{WW-rank2})
\begin{align}
 c_m:=\sum_{i+2j=m}\frac{q^{\binom{i}{2}+2j}}{(q;q)_i(q^2;q^2)_j}=\frac{1}{(q;q)_m}-\frac{1}{(q;q)_{m-2}}=\frac{q^{m-1}(1+q-q^m)}{(q;q)_m}. \label{eq-cm}
\end{align}
This can be proved easily through the generating function
\begin{align*}
 \sum_{m\geq 0}c_mz^m
 =\frac{(-z;q)_\infty}{(z^2q^2;q^2)_\infty}
 =\frac{1-z^2}{(z;q)_\infty}.
\end{align*}

Applying \eqref{eq-cm} and then using \eqref{Euler} to sum over $m$, we deduce that 
\begin{align}\label{F3-S-F2}
&F_3(1,5,1,12)
 =q^{-1}\sum_{n\geq 0}(-1)^nq^{n^2}
 \big((1+q)(-q^{2n+2};q)_\infty-(-q^{2n+3};q)_\infty\big) \nonumber \\
 &\qquad \qquad \qquad \qquad \times \sum_{k+2\ell =n} \frac{q^{4\ell^2+4\ell}}{(q^2;q^2)_k(q^4;q^4)_\ell}\nonumber \\
 &=(-q;q)_\infty\sum_{n\geq 0}
 \frac{(-1)^nq^{n^2}\big(1+(1+q)q^{2n+1}\big)}{(-q;q)_{2n+2}} \times \frac{D_n^{(1)}(q^2)}{(q^2;q^2)_{n}} \nonumber \\
 &=(-q;q)_\infty\sum_{n\geq 0}
 \frac{(-1)^nq^{n^2}(q;q^2)_{n+1}}{(q^2;q^2)_{2n+1}}\Big(1+\frac{q^{2n+1}}{1+q^{2n+2}}\Big)D_n^{(1)}(q^2) \nonumber \\
 &=\frac{1-q}{1-q^2}(-q;q)_\infty \sum_{n=0}^\infty (-1)^n q^{n^2}(q^3;q^2)_{n}\Big(\beta_n^{(6)}(q^2;q^2)+q\beta_n^{(7)}(q^2;q^2)\Big) \nonumber \\
 &=S(q)+qF_2(3,3,16).
\end{align}
Here we recognize the second sum involving $\beta_n^{(7)}(q^2;q^2)$ as $F_2(3,3,16)$ (see \eqref{F2-3316-mid}) and denote the first sum by
\begin{align}
&S(q):=\frac{1-q}{1-q^2}(-q;q)_\infty \sum_{n=0}^\infty (-1)^n q^{n^2}(q^3;q^2)_{n}\beta_n^{(6)}(q^2;q^2).
\end{align}
Substituting \eqref{BP6} into \eqref{S4-aq-limit-negative}, we deduce that
\begin{align}
&(q^2;q^2)_\infty S(q)=\sum_{n=0}^\infty (-1)^nq^{n^2}(1-q^{2n+1})\alpha_n^{(6)}(q^2;q^2) \nonumber \\
&=\sum_{m=-\infty}^\infty (-1)^mq^{24m^2+7m}(1-q^{6m+1})+\sum_{m=-\infty}^\infty (-1)^mq^{24m^2+19m+3}(1-q^{6m+3}).
\end{align}
Replacing $m$ by $-m-1$ in the second sum, we deduce that
\begin{align}\label{S-result}
&(q^2;q^2)_\infty S(q)=\sum_{m=-\infty}^\infty (-1)^mq^{24m^2+7m}(1+q^{16m+5})-q\sum_{m=-\infty}^\infty q^{24m^2+13m}(1+q^{16m+7}) \nonumber \\
&=\mathcal{Q}(q^5;q^{16})-q\mathcal{Q}(q^7;q^{16}).
\end{align}
Substituting \eqref{S-result} and \eqref{F2-3316-Q} into \eqref{F3-S-F2}, we obtain \eqref{thm3-id}.
\end{proof}

\section{Proof of the multisum identity}\label{sec-thm-general}
Before giving a proof for Theorem \ref{thm-general}, we make some preparations. 
Throughout this section we let $\omega$ be a primitive cube root of unity. For every $m\in\mathbb{Z}$, we define
\begin{equation}\label{eta-A-defn}
\mathcal{A}_m= \mathcal{A}_m(q)=\eta_m q^{m(m+1)/2}
\end{equation}
where
\begin{align}\label{eta-value}
    \eta_m:=\frac{(-1)^m(\omega^{m+1}-\omega^{2m+2})}{\omega-\omega^2}=\begin{cases}
 (-1)^r, & m=3r, \\
 (-1)^r, & m=3r+1, \\
 0,  & m=3r+2.  
  \end{cases}
\end{align}
For $M\geq 0$, we define
\begin{equation}\label{eq-B-definition}
 \mathcal{B}_M=\mathcal{B}_M(q):=\sum_{\substack{i,j,k,h\geq 0\\i+3j+3k+2h=M}}
 \frac{q^{ih+i+j+2k}}
 {(q;q)_i(q^3;q^3)_j(q^3;q^3)_k(q;q)_h}.
\end{equation}

\begin{lemma}\label{lem-B-product}
For $|x|<1$ we have
\begin{equation}\label{eq-B-product}
 \sum_{M\geq 0} \mathcal{B}_M(q)x^M
 =\frac{(x^3q^3;q^3)_\infty}{(xq;q)_\infty(x^2;q)_\infty}
 =\frac{(\omega x q,\omega^2xq;q)_\infty}{(x^2;q)_\infty}.
\end{equation}
\end{lemma}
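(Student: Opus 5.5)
The plan is to compute the generating function directly. Multiply $\mathcal{B}_M(q)$ by $x^M$ and sum over $M$. This removes the constraint $i+3j+3k+2h=M$: each summation variable gets its own power of $x$, namely $x^i$, $x^{3j}$, $x^{3k}$, $x^{2h}$. We then get
\begin{align*}
\sum_{M\geq 0}\mathcal{B}_M(q)x^M=\sum_{i,h\geq 0}\frac{q^{ih+i}x^{i+2h}}{(q;q)_i(q;q)_h}\sum_{j\geq 0}\frac{(xq^{1/3})^{3j}}{(q^3;q^3)_j}\sum_{k\geq 0}\frac{(xq^{2/3})^{3k}}{(q^3;q^3)_k}.
\end{align*}
The sums over $j$ and $k$ decouple from the rest. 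By Euler's identity \eqref{Euler} with base $q^3$, they evaluate to $1/(x^3q;q^3)_\infty$ and $1/(x^3q^2;q^3)_\infty$.

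Next we handle the coupled pair $(i,h)$. First sum over $h$ using \eqref{Euler}: this gives $\sum_h (x^2q^i)^h/(q;q)_h=1/(x^2q^i;q)_\infty=(x^2;q)_i/(x^2;q)_\infty$. The remaining sum over $i$ is then $\frac{1}{(x^2;q)_\infty}\sum_i \frac{(x^2;q)_i}{(q;q)_i}(xq)^i$. By the $q$-binomial theorem \eqref{q-binomial} with $a=x^2$ and $z=xq$, this equals $\frac{(x^3q;q)_\infty}{(x^2;q)_\infty(xq;q)_\infty}$.

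Now combine the pieces. We use the factorization $(x^3q;q)_\infty=(x^3q,x^3q^2,x^3q^3;q^3)_\infty$. The factors $(x^3q;q^3)_\infty$ and $(x^3q^2;q^3)_\infty$ cancel against the $j$- and $k$-sums. What remains is $(x^3q^3;q^3)_\infty/((xq;q)_\infty(x^2;q)_\infty)$, which is the first expression in \eqref{eq-B-product}.

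For the second expression, use $1-x^3q^{3n}=(1-xq^n)(1-\omega xq^n)(1-\omega^2xq^n)$ for each $n\geq1$. This gives $(x^3q^3;q^3)_\infty=(xq,\omega xq,\omega^2xq;q)_\infty$, and the factor $(xq;q)_\infty$ cancels.

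All interchanges of summation are justified by absolute convergence for $|x|<1$ and $|q|<1$. No step is a real obstacle. The only point needing care is the order of summation for the coupled exponent $q^{ih}$: summing over $h$ first is what turns the $i$-sum into a $q$-binomial series.
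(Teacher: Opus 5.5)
Your proof is correct and is essentially the paper's argument: Euler's identity for the $j$- and $k$-sums, then Euler's identity on one of the coupled variables $i,h$ and the $q$-binomial theorem on the other. The only difference is the order. The paper sums over $i$ first and then applies the $q$-binomial theorem with $a=xq$, $z=x^2$. So your remark that summing over $h$ first is what makes the argument work is not quite right: either order succeeds.
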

\begin{proof}
Summing over $j$ and $k$ first and then over $i$ using \eqref{Euler} in \eqref{eq-B-definition}, we have
\begin{align*}
 \sum_{M\geq 0} \mathcal{B}_M(q)x^M
&=\frac{1}{(x^3q,x^3q^2;q^3)_\infty}
 \sum_{i,h\geq 0}\frac{q^{ih+i}x^{i+2h}}{(q;q)_i(q;q)_h} \nonumber \\
 &=\frac{1}{(x^3q,x^3q^2;q^3)_\infty(xq;q)_\infty}
   \sum_{h\geq 0}\frac{(xq;q)_h x^{2h}}{(q;q)_h}\\
 &=\frac{(x^3q;q)_\infty}
 {(x^3q,x^3q^2;q^3)_\infty(xq;q)_\infty(x^2;q)_\infty} \quad \text{(by \eqref{q-binomial})} \\
 &=\frac{(x^3q^3;q^3)_\infty}{(xq;q)_\infty(x^2;q)_\infty}. \qedhere
\end{align*}
\end{proof}

\begin{lemma}\label{lem-finite-theta}
For every $M\geq 0$,
\begin{equation}\label{eq-finite-theta}
 \mathcal{B}_M(q)=\sum_{\substack{-M\le m\le M\\m\equiv M \!\! \pmod2}}
 \frac{\mathcal{A}_m(q)}{(q;q)_{(M-m)/2}(q;q)_{(M+m)/2}}.
\end{equation}
\end{lemma}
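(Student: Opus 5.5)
Both sides are coefficients of $x^M$, so the plan is to compute the generating function $\sum_{M\ge0}x^M\cdot(\text{RHS})$ and show that it equals the product in Lemma~\ref{lem-B-product}. Put $a=(M-m)/2$ and $b=(M+m)/2$; these are nonnegative integers with $M=a+b$ and $m=b-a$. Then
\begin{align*}
\sum_{M\ge0}x^M\sum_{m}\frac{\mathcal{A}_m}{(q;q)_{(M-m)/2}(q;q)_{(M+m)/2}}
=\sum_{m\in\mathbb{Z}}\mathcal{A}_m x^{|m|}\sum_{a\ge0}\frac{x^{2a}}{(q;q)_a(q;q)_{a+|m|}}.
\end{align*}
The inner sum is not a product. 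It is cleaner to multiply through by $(x^2;q)_\infty$ and use a Cauchy-type identity. Alternatively, introduce a second variable $z$ marking $m$: set
\[
F(x,z)=\sum_{a,b\ge0}\frac{x^{a+b}z^{b-a}}{(q;q)_a(q;q)_b}=\frac{1}{(xz;q)_\infty(x/z;q)_\infty},
\]
by Euler's identity \eqref{Euler}. Then the RHS generating function is the constant term in $z$ of $F(x,z)\sum_m\mathcal{A}_m z^{-m}$.

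The cleanest route is to evaluate $\sum_m \mathcal{A}_m z^{m}$ via \eqref{eta-value}. Write $\eta_m$ as a combination of $(-\omega)^m$ and $(-\omega^2)^m$. Then
\[
\sum_m\mathcal{A}_m z^m=\frac{1}{\omega-\omega^2}\Big(\omega\,\theta(-\omega z)-\omega^2\theta(-\omega^2 z)\Big),
\qquad \theta(y)=\sum_m q^{m(m+1)/2}y^m=(-yq,-1/y,q;q)_\infty
\]
by Jacobi triple product. This gives a theta function with known zeros. Then we need the constant term of a product of $1/((xz;q)_\infty(x/z;q)_\infty)$ with this theta function. This is a standard computation: expanding $1/(x/z;q)_\infty$ and $\theta$ reduces to a bilateral series, and the Ramanujan ${}_1\psi_1$ summation (or equivalently, the known identity $\sum_m q^{\binom{m+1}{2}}y^m x^{|m|}\cdots$) evaluates it.

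That route is heavy, so I will actually carry out a more elementary variant. Multiply both sides of the target generating-function identity by $(x^2;q)_\infty$. The claimed left side becomes $(\omega xq,\omega^2xq;q)_\infty$. By \eqref{Euler2} this is
\[
\sum_{s,t\ge0}\frac{q^{\binom{s+1}{2}+\binom{t+1}{2}}(-\omega x)^s(-\omega^2x)^t}{(q;q)_s(q;q)_t}.
\]
For the right side, I would use the known identity
\[
(x^2;q)_\infty\sum_{a\ge0}\frac{x^{2a}}{(q;q)_a(q;q)_{a+m}}=\frac{1}{(q;q)_\infty}\sum_{k\ge0}\frac{(-1)^kq^{\binom{k}{2}}x^{2k}(q^{k+1};q)_\infty\cdots}{\cdots},
\]
or more simply verify coefficientwise. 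Concretely, the statement is equivalent to
\[
\sum_{s+t=N}\frac{q^{\binom{s+1}{2}+\binom{t+1}{2}}(-1)^N\omega^{s+2t}}{(q;q)_s(q;q)_t}
=\sum_{\substack{a+b+2c=N}}\frac{(-1)^cq^{\binom c2}\mathcal{A}_{b-a}}{(q;q)_a(q;q)_b(q;q)_c}.
\]
Reduce $\omega^{s+2t}=\omega^{s-t}$. On the left, group by $d=s-t$. Summing over the shared index of $(a,b)$ and $c$ on the right via $q$-Chu--Vandermonde, the right becomes $\sum_m \mathcal{A}_m$ times a single ratio. The task is then to match $\omega^{d}$-weighted terms against $\eta_m q^{m(m+1)/2}$ by splitting into real/$\omega$-parts.

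The main obstacle is this last matching: showing that the $d$-grouped left side, after inserting $\eta_m=\frac{(-1)^m(\omega^{m+1}-\omega^{2m+2})}{\omega-\omega^2}$, collapses to the same bilateral sum. I expect the symmetry $s\leftrightarrow t$ to be used here, since it swaps $\omega$ and $\omega^2$. I would handle it by proving the two-variable identity
\[
\frac{(yq,q/y\cdot x^2/x^2\ldots)}{}\quad\text{namely}\quad (xyq,\,xq/y;q)_\infty=(x^2;q)_\infty\sum_{a,b}\frac{x^{a+b}\,(\text{coefficient } y^{b-a}\text{ weights})}{(q;q)_a(q;q)_b}\cdot(-1)^{b-a}q^{\binom{b-a+1}{2}},
\]
which is a form of the $z$-constant-term computation via Jacobi triple product. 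I would then specialize $y=\omega$ and $y=\omega^2$, subtract, and divide by $\omega-\omega^2$; this produces exactly $\eta_m$ by \eqref{eta-value}. This two-variable identity is where I would spend the effort. I would first try to prove it by checking that both sides satisfy the same $q$-difference equation in $x$ ($x\mapsto xq$) and agree at $x=0$.
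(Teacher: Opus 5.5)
Your reduction matches the paper's: you compare the $x$-generating function of the right-hand side with \eqref{eq-B-product}, and you write $\eta_m$ as $\bigl(\omega(-\omega)^m-\omega^2(-\omega^2)^m\bigr)/(\omega-\omega^2)$. The problem is the two-variable identity your final plan rests on, which is false as you state it. The correct evaluation is
\[
\sum_{a,b\ge 0}\frac{(-y)^{b-a}q^{\binom{b-a+1}{2}}x^{a+b}}{(q;q)_a(q;q)_b}=\frac{(xyq,\,x/y;q)_\infty}{(x^2;q)_\infty},
\]
with $x/y$, not $xq/y$. Already the coefficient of $x$ on the left is $-(yq+y^{-1})/(1-q)$, whereas $(xyq,xq/y;q)_\infty/(x^2;q)_\infty$ gives $-(yq+qy^{-1})/(1-q)$.

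This is not cosmetic. With the correct identity, $y=\omega$ and $y=\omega^2$ produce two different products, $(\omega xq,\omega^2x;q)_\infty$ and $(\omega^2xq,\omega x;q)_\infty$ (each over $(x^2;q)_\infty$). So the $\omega$-combination is needed on the product side too, not only to produce $\eta_m$. After factoring out $(\omega xq,\omega^2xq;q)_\infty$, the leftover linear factors combine as $\omega(1-\omega^2x)-\omega^2(1-\omega x)=\omega-\omega^2$. That cancellation is where the particular weights in \eqref{eta-value} do their work. Your plan skips it, because in your version each specialization already yields $(\omega xq,\omega^2xq;q)_\infty$.

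Second, you leave the identity itself unproved. You propose a $q$-difference equation in $x$ that you do not set up, and which could not close for the misstated version. No such machinery is needed. Write $\binom{b-a+1}{2}=\binom{a}{2}+\binom{b+1}{2}-ab$ and sum over $a$ with \eqref{Euler2}. This gives $(xq^{-b}/y;q)_\infty=(-x/y)^bq^{-\binom{b+1}{2}}(yq/x;q)_b\,(x/y;q)_\infty$, so the summand in $b$ becomes $x^{2b}(yq/x;q)_b/(q;q)_b$. Finish with the $q$-binomial theorem \eqref{q-binomial}, which gives $(xyq;q)_\infty/(x^2;q)_\infty$. This is exactly the paper's argument (its $G(a,x)$ with $a=yq$). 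Your other sketched routes, the constant-term/${}_1\psi_1$ computation and the coefficientwise $q$-Chu--Vandermonde matching, are left with placeholders and are not proofs on their own.
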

\begin{proof}
For $a\ne0$ and $|x|<1$, we need the following known identity \cite[Theorem 3.1]{Cao-Wang2023}:
\begin{align}\label{G-id}
 G(a,x):=\sum_{i,j\geq 0}
 \frac{(-a)^{i-j}q^{(i-j)(i-j-1)/2}x^{i+j}}{(q;q)_i(q;q)_j} =\frac{(ax,xq/a;q)_\infty}{(x^2;q)_\infty}.
\end{align}
This can be proved directly (different from the proof in \cite{Cao-Wang2023}) by using \eqref{Euler} and \eqref{q-binomial}: 
\begin{align}
 G(a,x)
 =\sum_{i\geq 0}\frac{(-ax)^i q^{i(i-1)/2}}{(q;q)_i}
      (xq^{1-i}/a;q)_\infty=(xq/a;q)_\infty\sum_{i\geq 0}\frac{(a/x;q)_i x^{2i}}{(q;q)_i}.
 \label{eq-Euler-double}
\end{align}
Applying \eqref{q-binomial} to the last sum we obtain \eqref{G-id}. 

From \eqref{eta-A-defn} and \eqref{G-id} we deduce that
\begin{align*}
 &\sum_{i,j\geq 0}\frac{\mathcal{A}_{i-j}x^{i+j}}{(q;q)_i(q;q)_j}
 =\frac{\omega G(\omega q,x)-\omega^2G(\omega^2q,x)}
 {\omega-\omega^2} \nonumber \\
&= \frac{\omega (\omega xq,\omega^2x;q)_\infty-\omega^2(\omega^2xq ,\omega x;q)_\infty}{(\omega-\omega^2)(x^2;q)_\infty} \nonumber \\
 &=\frac{(\omega xq,\omega^2xq;q)_\infty \big(\omega(1-\omega^2 x)-\omega^2 (1-\omega x)\big)}{(\omega-\omega^2)(x^2;q)_\infty} \nonumber \\
 &=\frac{(\omega xq,\omega^2xq;q)_\infty}{(x^2;q)_\infty}.
\end{align*}
Comparing this with \eqref{eq-B-product}, using $M=i+j$ and $m=i-j$, we obtain \eqref{eq-finite-theta}.
\end{proof}

\begin{lemma}\label{lem-general-BP}
For $\sigma=0,1$, the sequences
\begin{equation}\label{eq-general-BP}
 \beta_r^{(\sigma)}=\mathcal{B}_{2r+\sigma}(q),\quad
 \alpha_r^{(\sigma)}=
 \frac{1-q^{2r+\sigma+1}}{(q;q)_{\sigma+1}} \mathcal{A}_{2r+\sigma}(q)
 \quad(r\geq 0)
\end{equation}
form a Bailey pair relative to $a=q^{\sigma+1}$. In other words,
\begin{equation}\label{eq-Bailey-relation}
 \mathcal{B}_{2N+\sigma}(q)=\sum_{r=0}^N
 \frac{\alpha_r^{(\sigma)}}{(q;q)_{N-r}(q^{\sigma+2};q)_{N+r}}
 \quad (N\geq 0).
\end{equation}
\end{lemma}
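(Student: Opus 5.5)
Lemma~\ref{lem-finite-theta} already gives $\mathcal{B}_M$ as a finite sum over $m\equiv M\pmod 2$. The plan is to rewrite that sum as a one-sided sum over $m\ge 0$ using a symmetry of $\mathcal{A}_m$, and then match the result with \eqref{eq-Bailey-relation} term by term.

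\textbf{Step 1: the symmetry.} I claim
\begin{equation*}
\mathcal{A}_{-m-1}=-\mathcal{A}_m \quad\text{for all } m\in\mathbb{Z}.
\end{equation*}
The factor $q^{m(m+1)/2}$ is invariant under $m\mapsto -m-1$. For the sign factor, from \eqref{eta-value} we have $\eta_{-m-1}=(-1)^{m+1}(\omega^{-m}-\omega^{-2m})/(\omega-\omega^2)$. Using $\omega^{-m}=\omega^{2m}$ and $\omega^{-2m}=\omega^{m}$, this becomes $(-1)^{m+1}(\omega^{2m}-\omega^m)/(\omega-\omega^2)$. Multiplying numerator and denominator by a suitable power of $\omega$ should turn this into $-\eta_m$. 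As a sanity check against the table in \eqref{eta-value}: $\eta_{-1}=0=-\eta_0$ fails, since $\eta_0=1$. So the symmetry needs reconsideration. Direct values give $\eta_{-1}$ with $-1=3(-1)+2$, hence $\eta_{-1}=0$, while $\eta_{-2}=(-1)^{-1}=-1=-\eta_1$ and $\eta_{-3}=-1=-\eta_2\cdot$? But $\eta_2=0$, so that pairing fails too. The correct symmetry appears to be $\mathcal{A}_{-m-2}=-\mathcal{A}_m$ up to a power of $q$. Indeed $\eta_{-m-2}=-\eta_m$ checks: $(-2,0)$ gives $-1=-1$, $(-3,1)$ gives $-1=-1$, $(-4,2)$ gives $0=0$. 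However the $q$-exponents differ: $(m+2)(m+1)/2-m(m+1)/2=m+1$. So the relation is
\begin{equation*}
\mathcal{A}_{-m-2}=-q^{m+1}\mathcal{A}_m .
\end{equation*}
This $q^{m+1}$ is exactly what should produce the factor $1-q^{2r+\sigma+1}$ in $\alpha_r^{(\sigma)}$.

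\textbf{Step 2: folding the sum.} With $M=2N+\sigma$ and $m=2r+\sigma$, the terms with $m\ge 0$ are indexed by $r=0,\dots,N$. The terms with $m\le -2$ are $m'=-m-2\ge 0$, with the same parity as $M$. The remaining term $m=-1$ (present when $\sigma=1$) has $\eta_{-1}=0$ and vanishes. Pairing $m$ with $-m-2$ gives
\begin{equation*}
\mathcal{B}_{2N+\sigma}=\sum_{r=0}^N\mathcal{A}_{2r+\sigma}\Big(\frac{1}{(q;q)_{N-r}(q;q)_{N+r+\sigma}}-\frac{q^{2r+\sigma+1}}{(q;q)_{N+r+\sigma+1}(q;q)_{N-r-1}}\Big).
\end{equation*}
For the second term I use $(M-m')/2=N+r+\sigma+1$ and $(M+m')/2=N-r-1$.

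\textbf{Step 3: matching with \eqref{eq-Bailey-relation}.} The target coefficient of $\mathcal{A}_{2r+\sigma}$ is
\begin{equation*}
\frac{1-q^{2r+\sigma+1}}{(q;q)_{\sigma+1}(q;q)_{N-r}(q^{\sigma+2};q)_{N+r}}=\frac{1-q^{2r+\sigma+1}}{(q;q)_{N-r}(q;q)_{N+r+\sigma+1}}.
\end{equation*}
Put the folded expression over the common denominator $(q;q)_{N-r}(q;q)_{N+r+\sigma+1}$. Its numerator becomes
\begin{equation*}
(1-q^{N+r+\sigma+1})-q^{2r+\sigma+1}(1-q^{N-r}) = 1-q^{2r+\sigma+1},
\end{equation*}
because the two terms $q^{N+r+\sigma+1}$ cancel. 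This is exactly the target coefficient, which proves \eqref{eq-Bailey-relation}.

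\textbf{Main obstacle.} The only delicate point is getting the reflection $m\mapsto -m-2$ right, rather than the naive $m\mapsto -m-1$. One must also handle the boundary terms: the vanishing $m=-1$ term when $\sigma=1$, and the convention $1/(q;q)_{-1}=0$ at $r=N$.
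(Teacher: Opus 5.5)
Your proposal is correct and follows essentially the paper's route. You use the reflection $\mathcal{A}_{-m-2}=-q^{m+1}\mathcal{A}_m$ together with $\mathcal{A}_{-1}=0$ to fold the finite theta sum of Lemma~\ref{lem-finite-theta} onto $m\ge 0$. Your cancellation $(1-q^{N+r+\sigma+1})-q^{2r+\sigma+1}(1-q^{N-r})=1-q^{2r+\sigma+1}$ is the same computation as the paper's pairing step; the paper simply organizes it by first padding every term to the denominator $(q;q)_{(M-m)/2}(q;q)_{(M+m)/2+1}$, which is symmetric under $m\mapsto -m-2$.

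When writing it up, drop the abandoned $m\mapsto -m-1$ attempt. Derive $\eta_{-m-2}=-\eta_m$ directly from the $\omega$-formula using $\omega^{-k}=\omega^{2k}$, rather than from three spot checks. Also, the indices in your Step 2 remark should read $(M-m)/2$ and $(M+m)/2$ with $m=-m'-2$, not $(M\mp m')/2$.
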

\begin{proof}
The sequence in \eqref{eta-A-defn} satisfies
\begin{equation}\label{eq-A-reflection}
 \mathcal{A}_{-m-2}(q)=-q^{m+1}\mathcal{A}_m(q)\quad(m\in\mathbb{Z}),\quad \mathcal{A}_{-1}(q)=0.
\end{equation}
We first rewrite \eqref{eq-finite-theta} as
\begin{equation}\label{eq-finite-theta-rewrite}
 \mathcal{B}_M(q)=\sum_{\substack{-M-2\le m\le M\\m\equiv M \!\! \pmod2}}
 \frac{\mathcal{A}_m(q)(1-q^{(M+m)/2+1})}{(q;q)_{(M-m)/2}(q;q)_{(M+m)/2+1}}.
\end{equation}
Here the term corresponding to $m=-M-2$ vanishes.
The index set of $m$ is now invariant under $m\mapsto-m-2$.
Pairing these indices, the two factorials in the denominator are interchanged, while by \eqref{eq-A-reflection} the combined numerator is
\begin{align*}
\mathcal{A}_m(q)\big(1-q^{(M+m)/2+1}\big)
   +\mathcal{A}_{-m-2}(q)\big(1-q^{(M-m)/2}\big)=(1-q^{m+1})\mathcal{A}_m(q).
\end{align*}
In particular, when $m=-1$, the numerator vanishes. Thus
\begin{equation}\label{eq-BM-simplify}
 \mathcal{B}_M(q)=\sum_{\substack{0\le m\le M\\m\equiv M \!\! \pmod2}}
 \frac{(1-q^{m+1})\mathcal{A}_m(q)}
 {(q;q)_{(M-m)/2}(q;q)_{(M+m)/2+1}}.
\end{equation}
Substituting $M=2N+\sigma$ and $m=2r+\sigma$, and using the fact
\begin{align*}
 (q;q)_{N+r+\sigma+1}=(q;q)_{\sigma+1}(q^{\sigma+2};q)_{N+r},
\end{align*}
we obtain \eqref{eq-Bailey-relation}.
\end{proof}

\begin{proof}[Proof of Theorem~\ref{thm-general}]
We denote by $W_{n,b}(q)$ the left-hand side of \eqref{eq-thm-general}. For each summand, set
\begin{equation}\label{eq-change}
 d=i+3j+3k,\quad K_r=L_r+\frac{d}{2} \quad (1\leq r\leq 2n).
\end{equation}
Completing squares in \eqref{eq-quadratic} and \eqref{eq-linear} gives
\begin{equation}\label{eq-complete-square}
 \frac{Q_n+R_{n,b}}2
 =\sum_{r=1}^{2n} K_r^2+\sum_{r=2n-b+1}^{2n} K_r
   +iL_{2n}+i+j+2k.
\end{equation}

Moreover,
\begin{align*}
 K_r-K_{r+1}=\ell_r\quad(1\le r<2n),\quad
 2K_{2n}=i+3j+3k+2L_{2n}.
\end{align*}
For fixed $K_{2n}$, the sum over $i,j,k,L_{2n}$ is therefore exactly $\mathcal{B}_{2K_{2n}}(q^2)$ from \eqref{eq-B-definition}. Conversely, given $K_1\ge\cdots\ge K_{2n}$ of a common integer or half-integer parity and a term of $B_{2K_{2n}}$, set $\ell_r=K_r-K_{r+1}$ for $r<2n$ and $\ell_{2n}=h$. Then $L_r=K_r-d/2$ for every $r$.

Separating the two parities and writing $K_r=N_r+\sigma/2$, we obtain
\begin{align}
 W_{n,b}(q)&=\sum_{\sigma=0}^1 q^{(n+b)\sigma}\sum_{N_1\ge\cdots\ge N_{2n}\geq 0} \frac{q^{2(\sum_{r=1}^{2n}N_r^2+\sigma\sum_{r=1}^{2n} N_r +\sum_{r=2n-b+1}^{2n} N_r)}
       \mathcal{B}_{2N_{2n}+\sigma}(q^2)}{\prod_{r=1}^{2n-1}(q^2;q^2)_{N_r-N_{r+1}}}.
 \label{eq-W-mid}
\end{align}
We now apply Lemma~\ref{lem-lattice} with $q$ replaced by $q^2$, $a=q^{2(\sigma+1)}$, $k=2n$ and $j=2n-b$ to the  Bailey pair \eqref{eq-general-BP}. Its left-hand side is the inner sum in \eqref{eq-W-mid}.
On the right, the denominator $1-aq^{4r}$ cancels against the
corresponding factor in $\alpha_r^{(\sigma)}$, and
\begin{align*}
 (q^{2(\sigma+2)};q^2)_\infty(q^2;q^2)_{\sigma+1}=(q^2;q^2)_\infty.
\end{align*}
Hence we have
\begin{align*}
W_{n,b}(q)=\frac{1}{(q^2;q^2)_\infty}\sum_{\sigma=0}^1 q^{(n+b)\sigma}\sum_{r\geq 0} q^{4nr^2+4n\sigma+2br} (1-q^{(4r+2\sigma+2)(2n-b+1)}) \mathcal{A}_{2r+\sigma}(q^2).
\end{align*}
Combining the two parities by setting $m=2r+\sigma$, we obtain
\begin{equation}\label{eq-unilateral}
 W_{n,b}(q)=\frac{1}{(q^2;q^2)_\infty}\sum_{m\geq 0} \mathcal{A}_m(q^2) q^{nm^2+bm}
 \big(1-q^{2(m+1)(2n-b+1)}\big).
\end{equation}
From \eqref{eq-A-reflection} we have
\begin{align*}
 \mathcal{A}_{-m-2}(q^2)q^{n(-m-2)^2+b(-m-2)}
 &=-\mathcal{A}_m(q^2) q^{nm^2+bm+2(m+1)(2n-b+1)}.
\end{align*}
Since $\mathcal{A}_{-1}=0$, after replacing $m$ by $-m-2$ in the second term in \eqref{eq-unilateral}, we obtain
\begin{equation}\label{eq-bilateral}
 W_{n,b}(q)=\frac{1}{(q^2;q^2)_\infty}\sum_{m\in\mathbb{Z}}\mathcal{A}_m(q^2) q^{nm^2+bm}.
\end{equation}
Substituting \eqref{eta-A-defn} and \eqref{eta-value} into it, we have
\begin{align}
&(q^2;q^2)_\infty W_{n,b}(q)=\sum_{m\in\mathbb{Z}}\eta_m q^{(n+1)m^2+(b+1)m} \nonumber \\
 &=\sum_{r\in\mathbb{Z}}(-1)^r\Big(q^{9(n+1)r^2+3(b+1)r} +q^{(n+1)(1-3r)^2+(b+1)(1-3r)}\Big) \nonumber \\
&=\sum_{r\in\mathbb{Z}}(-1)^r q^{9(n+1)r^2-3(n+1)r}\Big(q^{3(n+b+2)r}+q^{(n+b+2)(1-3r)}  \Big) \nonumber \\
 &=\mathcal{Q}(q^{n+b+2};q^{6n+6}). \label{eq-theta-final}
\end{align}
Here for the last equality we used \eqref{eq-quintuple}. This proves \eqref{eq-thm-general}.
\end{proof}

\subsection*{Acknowledgements}
This work was supported by the National Key R\&D Program of China (Grant No.\ 2024YFA1014500).

\subsection*{Declaration of AI use}
GPT-5.6 Sol and GPT-6 Astra were used to assist in exploring proof strategies, performing calculations, and polishing the language. All AI-assisted arguments and suggestions were independently verified and revised as necessary by the author, who takes full responsibility for the content of the article.

\end{document}